\documentclass[letterpaper,10pt,twocolumn]{ieeeconf}
\usepackage[utf8]{inputenc}

\usepackage{amsmath,mathrsfs,amsfonts,amssymb,graphicx,epsfig}
 \usepackage{amsthm}
\usepackage{subcaption}
\usepackage{color,multirow,rotating}
\usepackage{algorithm,algpseudocode,algorithmicx}
\usepackage{cite}
\usepackage{url}
\usepackage{framed}
\usepackage{bm}

\newcommand{\bs}{\ensuremath{\boldsymbol}}
\newcommand{\tr}[1]{\ensuremath{\mathrm{trace}\big( #1 \big)}}
\renewcommand{\t}{\ensuremath{^{\mathrm{T}}}}

\newcommand{\uff}{\bs{u}_{\rm{ff}}}
\renewcommand{\vec}{{\rm{vec}}}
\newcommand{\jac}{{\rm{D}}}
\newcommand{\differential}{{\rm{d}}}
\newcommand{\hess}{{\rm{Hess}}}

\newtheorem{theorem}{Theorem}

\newtheorem{lemma}{Lemma}
\newtheorem{proposition}{Proposition}
\newtheorem{remark}{Remark}

\newtheorem{example}{Example}

\IEEEoverridecommandlockouts

\title{On the Convexity of Discrete Time Covariance Steering in Stochastic Linear Systems with Wasserstein Terminal Cost}

\author{Isin M. Balci, Abhishek Halder, Efstathios Bakolas
\thanks{Isin M. Balci and Efstathios Bakolas are with the Department of Aerospace Engineering and Engineering Mechanics, University of Texas at Austin, TX 78712-1221, USA; Abhishek Halder is with the Department of Applied Mathematics, University of California, Santa Cruz, CA 95064, USA; Emails: {\tt\small{isinmertbalci@utexas.edu, ahalder@ucsc.edu, bakolas@austin.utexas.edu}}%
}}

\begin{document}

\maketitle

\begin{abstract}
    In this work, we analyze the properties of the solution to the covariance steering problem for discrete time Gaussian linear systems with a squared Wasserstein distance terminal cost. In our previous work~\cite{p:balci2020covariance}, we have shown that by utilizing the state feedback control policy parametrization, this stochastic optimal control problem can be associated with a difference of convex functions program. Here, we revisit the same covariance control problem but this time we focus on the analysis of the problem. 
    Specifically, we establish the existence of solutions to the optimization problem and derive the first and second order conditions for optimality. We provide analytic expressions for the gradient and the Hessian of the performance index by utilizing specialized tools from matrix calculus. Subsequently, we prove that the optimization problem always admits a global minimizer, and finally, we provide a sufficient condition for the performance index to be a strictly convex function (under the latter condition, the problem admits a unique global minimizer). In particular, we show that when the terminal state covariance is upper bounded, with respect to the L\"{o}wner partial order, by the covariance matrix of the desired terminal normal distribution, then our problem admits a unique global minimizing state feedback gain. 
    The results of this paper
    set the stage for the development of specialized control design tools that exploit the structure of the solution to the covariance steering problem with a squared Wasserstein distance terminal cost.
\end{abstract}

\section{Introduction}
In this work, we study the existence and uniqueness of solutions to the covariance steering problem for discrete time Gaussian linear systems with a squared Wasserstein distance terminal cost. This instance of stochastic optimal control problem seeks for a feedback control policy that will steer the probability distribution of the state of the uncertain system, close to a goal multivariate normal distribution over a finite time horizon, where the closeness of the two distributions is measured in terms of the squared Wasserstein distance between them. In our previous work~\cite{p:balci2020covariance}, we have shown that the latter problem can be reduced into a difference of convex functions program (DCP) provided that the control policy conforms to the so-called state feedback control parametrization according to which the control input can be expressed as an affine function of the current state and all past states visited by the system. Whereas the focus in~\cite{p:balci2020covariance} was on the control design problem, in this work we focus on the analysis of the problem and in particular, addressing questions about the existence and uniqueness of solutions and the convexity (or lack thereof) of the performance index. 

\textit{Literature review:} Early works on covariance control problems can be attributed to Skelton and his co-authors who mainly examined infinite-horizon problems in a series of papers (refer to, for instance,~\cite{p:skeltonIJC,p:skeltonTAC,p:levy1997discrete}). Recently, finite-horizon covariance control problems for Gaussian linear systems have received significant attention; the reader may refer to~\cite{p:georgiou15A,p:georgiou15B,p:georgiou15C} for the continuous-time case and~\cite{p:bakcdc16,p:PT2017,p:BAKOLAS2018,p:golds2017,p:bakCDC2018,p:ridderhof2020,p:kotsalis2020} for the discrete-time case. The covariance steering problem for continuous-time Gaussian linear systems with a Wasserstein distance terminal cost was first studied in \cite{halder2016finite} whereas the same problem but for the discrete-time case was studied in \cite{p:balci2020covariance}. Both of these references present numerical algorithms (shooting method in \cite{halder2016finite} and convex-concave procedure in \cite{p:balci2020covariance}) for control design but do not address theoretical questions regarding the existence and uniqueness of solution, or investigate convexity properties of the performance index. 

\textit{Main contributions:}
Next we summarize the main contributions of this paper. First, we establish the existence of at least one global minimizer to the optimization problem. Subsequently, we derive first and second order 
conditions of optimality, and provide analytic expressions for the gradient and the Hessian of the performance index by utilizing specialized tools from matrix calculus (these analytic expressions may also facilitate the implementation of numerical optimization algorithms, and thus improve in practice the speed of convergence). Finally, we present a sufficient condition for the performance index to be a strictly convex function under which the optimization problem admits a unique solution. In particular, we show that when the terminal state covariance is bounded from above, with respect to the L\"{o}wner partial order over the cone of positive semidefinite matrices, by the covariance matrix of the goal normal distribution, then the Hessian of the performance index becomes a strictly positive definite matrix, which in turn implies that the performance index is a strictly convex function. 


\textit{Outline of the paper:} In Section \ref{s:prelim}, we review a few important results from matrix calculus that we use throughout the paper. In Section \ref{s:problem}, we formulate the covariance steering problem with a Wasserstein distance terminal cost, and briefly outline its reduction into a DCP. Sections \ref{sec:FirstOrderConditions} and \ref{sec:SecondOrderConditions} present the first and second order optimality conditions for the latter optimization problem along with a sufficient condition for the convexity of the performance index. Finally, Section~\ref{s:conclusion} concludes the paper with a summary of remarks and directions for future research.

\section{Preliminaries}\label{s:prelim}
Here we collect some notations and background material that will come in handy throughout this paper.
\subsubsection*{Set and inequality notations} We denote the set of nonnegative integers as $\mathbb{N}_{0}:=\{0,1,2,\hdots\}$, and for any positive integer $\nu$, let $\mathbb{N}_{0}[\nu]:=\{0,1,\hdots,\nu\}$. We use the inequalities $\succeq$ and $\succ$ in the sense of L\"{o}wner partial order.
\subsubsection*{Kronecker product, Kronecker sum, and the $\vec$ operator} The basic properties of Kronecker product will be useful in the sequel, including 
\begin{align}
\left(M_{1}\otimes M_{2}\right)\left(M_{3}\otimes M_{4}\right) = \left(M_{1}M_{3}\otimes M_{2}M_{4}\right),    
\label{ProductOfKronProduct}    
\end{align}
and that matrix transpose and inverse are distributive w.r.t. the Kronecker product. The vectorization operator $\vec(\cdot)$ and the Kronecker product are related through
\begin{align}
\vec\left(M_{1}M_{2}M_{3}\right) = \left(M_{3}\t\otimes M_{1}\right)\vec\left(M_{2}\right).
\label{kronANDvec}    
\end{align}
Furthermore, 
\begin{align}
{\rm{trace}}\left(M_{1}^{\top}M_{2}\right) = \vec{(M_{1})}\t \vec{(M_{2})}.
\label{FrobInnerProductANDVec}
\end{align}
We will also need the Kronecker sum 
\[M_{1}\oplus M_{2}:= M_{1}\otimes I + I\otimes M_{2},\]
where $I$ is an identity matrix of commensurate dimension. For matrices $M,L$ of appropriate size and $L$ non-singular, we have 
\begin{align}
(L\otimes L)(M\oplus M)(L^{-1}\otimes L^{-1}) \!=\! LML^{-1}\oplus LML^{-1}
\label{KronSumProductSimilarityTransform}
\end{align}
which is easy to verify using the definition of Kronecker sum and \eqref{ProductOfKronProduct}, and will be useful later.

\subsubsection*{Commutation matrix} The commutation matrix $K_{0}$ is the unique symmetric permutation matrix such that
\[\vec\left(M\right) = K_{0}\:\vec\left(M\t\right),\]
see e.g., \cite{neudecker1983some}. Being orthogonal, $K_{0}$ satisfies
\[K_{0}^{-1} = K_{0}\t = K_{0}.\]
Therefore, $K_{0}$ is idempotent of order two. Two useful properties of $K_{0}$ are
\[K_{0}\:\vec\left(I\right) = \vec\left(I\right), \; K_{0}\left(M_{1}\otimes M_{2}\right) = \left(M_{2}\otimes M_{1}\right)K_{0}.\]
Notice that $K_{0}$ being symmetric orthogonal, its eigenvalues are $\pm 1$. Consequently, the matrix $I+ K_{0}$, which is also symmetric idempotent, has eigenvalues $0$ and $2$.

Another observation that will be useful is that $I+K_{0}$ commutes with ``self Kronecker product or sum", i.e., for any square matrix $M$, we have
\begin{subequations}
\begin{align}
\left(I+K_{0}\right)\left(M\otimes M\right) &= \left(M\otimes M\right)\left(I+K_{0}\right),    
\label{IplusK0commutesWithSelfKronProduct}\\
\left(I+K_{0}\right)\left(M\oplus M\right) &= \left(M\oplus M\right)\left(I+K_{0}\right),    
\label{IplusK0commutesWithSelfKronSum}
\end{align}
\label{IplusK0commutesWithSelfKron}
\end{subequations}
which follows from the property of $K_{0}$ mentioned before. We also have
\begin{align}
\left(I+K_{0}\right)\left(M\oplus M\right)^{-1} = \left(M\oplus M\right)^{-1}\left(I+K_{0}\right).    
\label{IplusK0commutesWithSelfKronSumInverse}    
\end{align}
To see (\ref{IplusK0commutesWithSelfKronSumInverse}), notice that $K_{0}\left(M\oplus M\right)^{\!-1}$ equals
\begin{align*}
&\left(\left(M\oplus M\right)K_{0}^{-1}\right)^{\!-1}\!\!=\! \left(\left(M\oplus M\right)K_{0}\right)^{-1}\!\!=\! \left(K_{0}\left(M\oplus M\right)\right)^{\!-1}\\
&=\left(M\oplus M\right)^{\!-1}K_{0}^{-1} \!=\! \left(M\oplus M\right)^{\!-1}K_{0}.
\end{align*}

\subsubsection*{Matrix differential and Jacobian} The matrix differential $\differential(\cdot)$ and the vectorization $\vec(\cdot)$ are linear operators that commute with each other. We will frequently use the Jacobian identification rule \cite[Ch. 9, Sec. 5]{magnus2019matrix}, which for a given matrix function $F(X)$, is
\begin{align}
\differential\:\vec\left(F(X)\right) = \jac F(X)\:\differential\:\vec\:X,    
\label{JacIdentificationRule}
\end{align}
where $\jac F(X)$ is the Jacobian of $F$ evaluated at $X$. In case $F$ is independent of $X$, the Jacobain $\jac F$ is a zero matrix. Some Jacobians of our interest are collected in the Appendix.

\subsubsection*{Matrix geometric mean} Given two symmetric positive definite matrices $A$ and $B$, their geometric mean (see e.g., \cite{lawson2001geometric}) is the symmetric positive definite matrix
\begin{align}
A\#B := A^{\frac{1}{2}} \left(A^{-\frac{1}{2}} B A^{-\frac{1}{2}}\right)^{\frac{1}{2}} A^{\frac{1}{2}}.
\label{matrixGM}    
\end{align}
It satisfies intuitive properties such as $A\#A=A$, $A\#B=B\#A$, $\left(A\#B\right)^{-1}=A^{-1}\#B^{-1}$.

\subsubsection*{Function composition and normal distribution} We use the symbol $\circ$ to denote function composition. The symbol $z\sim\mathcal{N}(\mu,\Sigma)$ denotes that the random vector $z$ has normal distribution with mean vector $\mu$ and covariance matrix $\Sigma$.


\section{Problem Set up}\label{s:problem}
We consider a discrete-time stochastic linear system
\begin{equation}\label{eq:system-dynamics}
    x_{k+1} = A_k x_k + B_k u_k + G_k w_k, \quad k\in\mathbb{N}_{0},
\end{equation}
where $x_k \in \mathbb{R}^{n_x}$, $u_k\in \mathbb{R}^{n_u}$, and $w_k \in \mathbb{R}^{n_w}$ denote the state, control input, and disturbance vectors at time $t=k$, respectively. It is assumed that the initial state is a normal vector and in particular, $x_0 \sim \mathcal{N}(\mu_0, S_0)$, where $\mu_0\in \mathbb{R}^n$ and $S_0 \succ \bm{0}$, and in addition, the disturbance process is a sequence of independent and identically distributed random vectors $w_k \sim \mathcal{N}(0, S_w)$ for all $k \in \mathbb{N}_{0}$ and $S_w \succ \bm{0}$. We suppose that $x_0$ and $w_k$ are mutually independent for all $k \in \mathbb{N}_0$, from which it follows that $\mathbb{E}[x_0 w_k\t]=0$ for all $k \in \mathbb{N}_0$, where $\mathbb{E}\left[\cdot\right]$ denotes the expectation operator. We assume that the matrices $G_{k}$ are full rank for all $k\in\mathbb{N}_{0}[N-1]$.

For $N\in\mathbb{N}_{0}$, let 
\begin{align*}
\bm{x} &:= [x_0\t, x_1\t, \dots, x_N\t]\t \in \mathbb{R}^{(N+1)n_x},\\
\bm{u} &:= [u_0\t, u_1\t, \dots, u_{N-1}\t]\t \in\mathbb{R}^{N n_u},\\
\bm{w} & := [w_0\t, w_1\t, \dots, w_{N-1}\t]\t \in\mathbb{R}^{N n_w}.
\end{align*}
Then, we can write
\begin{equation}\label{s:simplified-dynamics}
    \bs{x} = \bs{\Gamma} x_0 + \bs{H_u u} + \bs{H_w w},
\end{equation}
where the block (column) vector
\begin{equation}\label{eq:Gammadef}
    \bs{\Gamma}  := [I_{n_x}\t~\Phi\t(1,0)~\Phi\t(2,0)~\dots~ \Phi\t(N,0)]\t,
\end{equation}
and for all $k,n\in\mathbb{N}_{0}$ with $k\geq n$, the matrices $\Phi(k,n):=A_{k-1} \hdots A_{n}$, and $\Phi(n,n):=I$ (note that $\Phi(n+1,n)=A_{n}$). 

Furthermore,
\begin{align}\label{eq:Hudef}
    \bs{H_u} & := \begin{bmatrix}
    \bs{0} & \bs{0} & \dots & \bs{0} \\
    B_0 & \bs{0} & \dots & \bs{0} \\
    \Phi(2,1)B_0 & B_1 & \dots & \bs{0} \\
    \vdots & \vdots & \vdots & \vdots \\
    \Phi(N, 1)B_0 & \Phi(N, 2)B_1 & \dots & B_{N-1}  
    \end{bmatrix}, 
\end{align}
and $\bs{H_w}$ is defined likewise by replacing the matrices $\{B_{k}\}_{k=0}^{N-1}$ in \eqref{eq:Hudef} with the matrices $\{G_{k}\}_{k=0}^{N-1}$.

The problem of interest is to perform minimum energy feedback synthesis for \eqref{eq:system-dynamics} over a time horizon of length $N$, such that the distribution of the terminal state $x_{N}$ goes close to desired distribution $\mathcal{N}\left(\mu_d,S_d\right)$ where $\mu_{d}\in\mathbb{R}^{n}$, $S_{d}\succ\bm{0}$ are given. The mismatch between the desired distribution and the distribution of the actual terminal state $x_{N}$ is penalized as a terminal cost quantified using the squared 2-Wasserstein distance $W_{2}^{2}\left(\cdot,
\cdot\right)$ between those two distributions. We refer the readers to \cite[Sec. II]{p:balci2020covariance} for the details on problem formulation.

To recover the statistics of the terminal state $x_{N}$ from the concatenated state $\bs{x}$, the following relation will be useful:
\begin{equation}
    x_{N} = \bs{F}\bs{x}, \quad \bs{F}:=[\bs{0}, \dots, \bs{0}, I_{n_x}].
\end{equation}

It was shown in \cite{p:balci2020covariance} that the problem of discrete time covariance steering with Wasserstein terminal cost subject to \eqref{eq:system-dynamics} (or equivalently (\ref{s:simplified-dynamics})), can be reduced to a difference of convex functions program, provided the control policy is parameterized as
\begin{equation}\label{eq:statefeedbackpolicy}
    u_k = u_{\mathrm{ff}, k} + \sum_{t=0}^{k} K_{(k,t)} (x_t - \Bar{x}_t)
\end{equation}
where $\Bar{x}_t:=\mathbb{E}\left[x_t\right]$, and the parameters of the control policy are $u_{\mathrm{ff}, k} \in \mathbb{R}^{n_x}$, $K_{(k,t)} \in \mathbb{R}^{n_u \times n_x}$ for all $\{(k, t) \in \mathbb{N}_0\mid k \geq t\}$.
The concatenated control input $\bs{u}$ can be written as
\begin{equation}
    \bm{u} := \bm{u}_{\mathrm{ff}} + \bm{K} (\bm{x} - \bm{\Bar{x}})
\end{equation}
where $\bm{K} := \left[ \tilde{\bm{K}} ~~ \bm{0} \right]$, and
\begin{align}
    \Tilde{\bm{K}} :=
    \begin{bmatrix}
    K_{(0,0)} & \bm{0} & \dots & \bm{0}  \\
    K_{(1,0)} & K_{(1,1)} & \dots & \bm{0}  \\
    \vdots & \vdots & \ddots & \vdots  \\
    K_{(N-1, 0)} & K_{(N-1, 1)} & \dots & K_{(N-1, N-1)} \\
    \end{bmatrix}.
\end{align}
The controller synthesis thus amounts to computing the optimal feedforward control and feedback gain pair $(\uff,\bm{K})$.

In \cite{p:balci2020covariance}, the authors proposed a bijective mapping $\bm{K}\mapsto \bm{\Theta}$ and back, given by
\begin{subequations}\label{transformations}
\begin{gather}
    \bs{\Theta} := \bs{K}(I - \bs{H_u K})^{-1},\label{eqx}\\
    \bs{K} := (I + \bs{\Theta H_u})^{-1} \bs{\Theta}. \label{eqinvtrans}
\end{gather}
We have
\begin{align}
    (I - \bs{H_u K})^{-1} & = I + \bs{H_u K} (I - \bs{H_u K})^{-1} \nonumber \\ 
     & = (I + \bs{H_u \Theta}).
\end{align}
\end{subequations}
With the new feedback gain parameterization $\bm{\Theta}$, it was 
deduced in \cite{p:balci2020covariance} that the optimal pair $(\uff, \bs{\Theta})$ minimizes the objective $J : \mathbb{R}^{Nn_{u}}\times\mathbb{R}^{N n_u \times (N+1) n_x}\mapsto \mathbb{R}_{\geq 0}$, given by
\begin{align}\label{eq:total-objective}
    J(\uff, \bs{\Theta}) = J^{\text{cost-to-go}}(\uff, \bs{\Theta}) + \lambda W_2^2 (\uff, \bs{\Theta})
\end{align}
where $\lambda>0$ is given, and
\begin{align}\label{eq:control-objective}
    J^{\text{cost-to-go}}(\uff, \bs{\Theta}) = {\rm{trace}}\left(\bs{\Theta} \Tilde{S}  \bs{\Theta}^{\mathrm{T}}\right) + \lVert \uff \rVert_2^{2},
\end{align}
and
\begin{align}\label{eq:wasserstein-objective}
    & W_2^2(\uff, \bs{\Theta}) = \lVert \mu_d - (\bs{\Gamma}\mu_0 + \bs{H_u} \uff)  \rVert_{2}^{2} \nonumber \\
    & ~ + {\rm{trace}}\!\left(\!\bs{F}(I+\bs{H_u \Theta}) \Tilde{S} (I+\bs{H_u \Theta})^{\mathrm{T}} \bs{F}^{\mathrm{T}} + S_d\!\right) \nonumber \\
    & ~ -2 {\rm{trace}}\!\left(\!(\sqrt{S_d} \bs{F}(I+\bs{H_u \Theta}) \Tilde{S} (I+\bs{H_u \Theta})^{\mathrm{T}} \bs{F}^{\mathrm{T}} \sqrt{S_d})^{\frac{1}{2}}\!\right),
\end{align}
where 
\begin{equation}
    \Tilde{S} := \bs{\Gamma}S_0\bs{\Gamma}\t + \bs{H_w} \mathbf{W} \bs{H_w}\t
\label{deftildeS}    
\end{equation}
and the block diagonal matrix $\mathbf{W} := \operatorname{blkdiag}(S_w,  \dots, S_w)$.

\begin{proposition}\label{prop:tildeSposdef}
Consider $\Tilde{S}$ as in \eqref{deftildeS}. Then $\Tilde{S}\succ\bm{0}$.
\end{proposition}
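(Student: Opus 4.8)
The plan is to recognize $\Tilde{S}$ as the covariance matrix of the concatenated trajectory $\bs{x}$ under the zero control policy, and then to establish positive definiteness by induction on the horizon length using a Schur complement argument that isolates the role of each noise term.

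First I would set $\bs{u}=\bs{0}$ in \eqref{s:simplified-dynamics}, giving $\bs{x}=\bs{\Gamma}x_0+\bs{H_w}\bs{w}$. Since $x_0$ and $\bs{w}$ are mutually independent with $\operatorname{Cov}(x_0)=S_0$ and $\operatorname{Cov}(\bs{w})=\mathbf{W}$, the covariance of $\bs{x}$ is exactly $\bs{\Gamma}S_0\bs{\Gamma}\t+\bs{H_w}\mathbf{W}\bs{H_w}\t=\Tilde{S}$ by \eqref{deftildeS}. Hence $\Tilde{S}$ is a genuine covariance matrix, so it is symmetric and at least positive semidefinite, and the remaining task is to rule out a nontrivial null space.

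I would then denote by $P_k:=\operatorname{Cov}([x_0\t,\dots,x_k\t]\t)$ the covariance of the length-$k$ sub-trajectory; because the first $k+1$ states do not depend on future noise, $P_k$ is the leading principal block of $\Tilde{S}$, and $P_N=\Tilde{S}$. I would prove $P_k\succ\bm{0}$ by induction on $k$. The base case is immediate since $P_0=S_0\succ\bm{0}$. For the inductive step I would use the recursion $x_{k+1}=A_kx_k+G_kw_k$ obtained from \eqref{eq:system-dynamics} with $u_k=0$, write $x_{k+1}=C_k[x_0\t,\dots,x_k\t]\t+G_kw_k$ with $C_k:=[\bm{0},\dots,\bm{0},A_k]$, and assemble the block form
\[
P_{k+1}=\begin{bmatrix}P_k & P_kC_k\t\\ C_kP_k & C_kP_kC_k\t+G_kS_wG_k\t\end{bmatrix},
\]
where the off-diagonal blocks follow from $\mathbb{E}[x_0w_k\t]=0$ and the independence of $w_k$ from the past states.

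The crux is the Schur complement of the $(1,1)$ block $P_k$ in $P_{k+1}$, which collapses to
\[
C_kP_kC_k\t+G_kS_wG_k\t-C_kP_kP_k^{-1}P_kC_k\t=G_kS_wG_k\t,
\]
i.e., precisely the conditional covariance of $x_{k+1}$ given the past trajectory. Since $S_w\succ\bm{0}$ and $G_k$ is full rank (so that $G_kS_wG_k\t\succ\bm{0}$), together with the inductive hypothesis $P_k\succ\bm{0}$, the standard Schur complement characterization of positive definiteness gives $P_{k+1}\succ\bm{0}$; running the induction up to $k=N$ yields $\Tilde{S}=P_N\succ\bm{0}$. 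I expect the main obstacle to be the clean identification of this Schur complement, namely verifying that the cross-covariance between the past trajectory and $x_{k+1}$ is exactly $P_kC_k\t$, so that the $C_kP_kC_k\t$ terms cancel and the residual reduces to the single noise contribution $G_kS_wG_k\t$. An alternative, purely algebraic route would be to show that the stacked matrix $[\bs{\Gamma}\;\bs{H_w}]$ has full row rank and invoke $\Tilde{S}=[\bs{\Gamma}\;\bs{H_w}]\operatorname{blkdiag}(S_0,\mathbf{W})[\bs{\Gamma}\;\bs{H_w}]\t$, but the inductive Schur complement argument localizes the full-rank assumption on each $G_k$ and is therefore the cleaner path.
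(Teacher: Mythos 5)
Your proof is correct, but it takes a genuinely different route from the paper's. The paper argues by contradiction on the null space of $\Tilde{S}$: any $v\neq 0$ with $v\t\Tilde{S}v=0$ must satisfy $\bs{\Gamma}\t v=\bm{0}$ and $\bs{H_w}\t v=\bm{0}$ (because $S_0\succ\bm{0}$ and $S_w\succ\bm{0}$), and the block lower-triangular structure of $\bs{H_w}$ together with the full-rank assumption on each $G_k$ then forces $v_N=\cdots=v_1=0$ by backward elimination, after which $\bs{\Gamma}\t v=0$ yields $v_0=0$. That is essentially the ``purely algebraic route'' you mention at the end---showing $[\bs{\Gamma}\;\;\bs{H_w}]$ has trivial left null space---carried out block by block. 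Your main argument instead interprets $\Tilde{S}$ via \eqref{s:simplified-dynamics} as the covariance of the uncontrolled trajectory and runs a forward induction on the leading principal blocks $P_k$, with the Schur complement of $P_k$ in $P_{k+1}$ collapsing to the conditional covariance $G_kS_wG_k\t$ of $x_{k+1}$ given the past. The induction is sound: the cross-covariance identification $P_kC_k\t$ holds precisely because $w_k$ is independent of $x_0,w_0,\dots,w_{k-1}$, and both proofs invoke the standing hypotheses in exactly the same way ($S_0\succ\bm{0}$, $S_w\succ\bm{0}$, and each $G_k$ of full row rank, so that $G_kS_wG_k\t\succ\bm{0}$ in your version and $G_k\t v=0\Rightarrow v=0$ in the paper's). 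The paper's version is shorter and needs no probabilistic reasoning; yours is more structural, showing in passing that every leading principal block of $\Tilde{S}$ is positive definite and isolating the per-step noise injection $G_kS_wG_k\t$ as the exact quantity whose nondegeneracy is needed at each stage.
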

\begin{proof}
From \eqref{deftildeS}, it is clear that $\Tilde{S}\succeq\bm{0}$. Suppose if possible that $\Tilde{S}$ is singular. Then there exists $(N+1)n_x\times 1$ vector $v\neq 0$ such that $v\t\Tilde{S}v=v\t(\bs{\Gamma}S_0\bs{\Gamma}\t + \bs{H_w} \mathbf{W} \bs{H_w}\t)v=0$, which in turn, is possible iff $\Gamma\t v = 0$ and $\bs{H_w}\t v=0$, since $S_0\succ\bm{0}$, $S_w\succ\bm{0}$.

Now let $v:=\left[v_0\t, v_1\t, \hdots, v_{N}\t\right]\t$ where the sub-vector $v_{i}\in\mathbb{R}^{n_x}$ for all $i\in\mathbb{N}_{0}[N]$. From $\bs{H_w}\t v=0$, we get $v_{1}=v_{2}=\hdots=v_{N}=0$ since the matrices $\{G_{k}\}_{k=0}^{N-1}$ are full rank per our assumption. In $\Gamma\t v = 0$, substituting $v_{1}=v_{2}=\hdots=v_{N}=0$, yields $v_{0}=0$. Thus, $v=0$ which contradicts our hypothesis. Therefore, the positive semidefinite matrix $\Tilde{S}$ is nonsingular, i.e., $\Tilde{S}\succ\bm{0}$.
\end{proof}

\begin{remark}\label{remark:theta}
An important consideration is that in order to ensure the causality of the control policy, the matrix $\bs{\Theta} \in \mathbb{R}^{N n_u \times (N+1) n_x}$ should be constrained to be block lower triangular of the form
\begin{equation}\label{eq:thetadef}
    \bs{\Theta} := 
    \begin{bmatrix}
    \theta_{0,0} & \bs{0} & \dots & \bs{0} & \bs{0} \\
    \theta_{1,0} & \theta_{1,1} & \dots & \bs{0} & \bs{0} \\
    \vdots & \vdots & \vdots & \vdots & \vdots \\
    \theta_{N-1, 0}& \theta_{N-1, 1} & \dots & \theta_{N-1,N-1} & \bs{0}
    \end{bmatrix}
\end{equation}
where $\theta_{i,j} \in \mathbb{R}^{n_u\times n_x}$ for all index pairs $(i, j)$.
\end{remark}

The block lower triangular condition on $\bs{\Theta}$ in Remark \ref{remark:theta} can be equivalently expressed as
\begin{equation}\label{CausalityFirstForm}
    \theta_{i, j} = \bm{0}\;\forall\;(i,j) \in \mathbb{N}_{0}[N-1]\times\mathbb{N}_{0}[N]\;\text{such that}\; j > i.
\end{equation}
We transcribe this constraint in terms of the decision variable $\bm{\Theta}$ as
\begin{equation}\label{CausalityConstr}   
    \mathcal{E}_{u,i} \bs{\Theta} \mathcal{E}_{x,j}^{\mathrm{T}} = \bs{0}\;\forall\;(i,j) \in \mathbb{N}_{0}[N-1]\times\mathbb{N}_{0}[N]\;\text{such that}\; j > i,
\end{equation}
where $\mathcal{E}_{u,i} \in \mathbb{R}^{n_u \times n_u N}$ and $\mathcal{E}_{x,i} \in \mathbb{R}^{n_x \times n_x (N+1)}$ are defined as block vectors whose $(i+1)$\textsuperscript{th} and $(j+1)$\textsuperscript{th}  blocks are equal to the identity matrices of suitable dimensions; all the other blocks are equal to the zero matrix.
For example,
\begin{align*}
    \mathcal{E}_{u,0} = \left[ I_{n_u}, \bm{0}, \dots, \bm{0} \right], \quad \mathcal{E}_{x,1} = \left[ \bm{0}, I_{n_x}, \bm{0}, \dots, \bm{0} \right],
\end{align*}
where $I_{\nu}$ denotes an identity matrix of size $\nu\times\nu$.

It is clear that \eqref{eq:control-objective} is a convex quadratic function in its arguments with Lipschitz continuous gradient. The squared Wasserstein distance \eqref{eq:wasserstein-objective} is a difference of convex functions in $(\uff, \bs{\Theta})$, and it can be shown that it is also Lipschitz continuous gradient. Thus, the objective $J$ in \eqref{eq:total-objective} is a difference of convex functions in the decision variables, and as such, it is unclear when it might in fact be convex. 
In \cite{p:balci2020covariance}, we used convex-concave procedure \cite{p:yuille2003concave} to numerically compute the optimal solution. 
In our numerical experiments, we observed multiple local minima which motivates investigating the conditions of optimality for \eqref{eq:total-objective}. This is what we pursue in Sections \ref{sec:FirstOrderConditions} and \ref{sec:SecondOrderConditions}. Before doing so, we show that the objective $J$ in \eqref{eq:total-objective} is not convex in general but there exists a global minimizer.


\begin{proposition}\label{prop:coercive}
The problem of minimizing the objective $J$ in \eqref{eq:total-objective} subject to the constraints \eqref{CausalityConstr}, admits a global minimizing pair $(\uff,\bm{\Theta})$.
\end{proposition}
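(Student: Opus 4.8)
The plan is to establish existence via the standard route for continuous coercive functions. I would show that $J$ is continuous, that the feasible set defined by \eqref{CausalityConstr} is closed, and that $J$ is coercive on that set; then a nonempty, closed and bounded (hence compact) sublevel set exists, and the Weierstrass extreme value theorem guarantees that the infimum is attained.

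First I would argue coercivity, which I expect to be the crux. Since $W_2^2(\uff,\bs{\Theta})$ is a squared Wasserstein distance between two Gaussians, it is nonnegative (the covariance part is exactly the squared Bures metric), so $J \geq J^{\text{cost-to-go}}$ and it suffices to show the cost-to-go term is coercive. Using \eqref{FrobInnerProductANDVec} and \eqref{kronANDvec} I would rewrite the quadratic term as
\begin{equation*}
{\rm{trace}}\big(\bs{\Theta}\Tilde{S}\bs{\Theta}\t\big) = \vec(\bs{\Theta})\t\big(\Tilde{S}\otimes I\big)\vec(\bs{\Theta}).
\end{equation*}
By Proposition \ref{prop:tildeSposdef}, $\Tilde{S}\succ\bm{0}$, hence $\Tilde{S}\otimes I\succ\bm{0}$ and its smallest eigenvalue $\lambda_{\min}(\Tilde{S})>0$ lower-bounds the form. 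Therefore
\begin{equation*}
J(\uff,\bs{\Theta}) \geq \lambda_{\min}(\Tilde{S})\,\lVert\bs{\Theta}\rVert_F^2 + \lVert\uff\rVert_2^2 \geq c\,\big(\lVert\bs{\Theta}\rVert_F^2 + \lVert\uff\rVert_2^2\big)
\end{equation*}
with $c:=\min\{\lambda_{\min}(\Tilde{S}),1\}>0$, which shows $J\to +\infty$ as $\lVert(\uff,\bs{\Theta})\rVert\to\infty$.

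Next I would handle continuity and closedness. The constraints \eqref{CausalityConstr} are linear equalities in $\bs{\Theta}$, so the feasible set is a linear subspace and in particular closed. The cost-to-go term is polynomial, hence continuous; for $W_2^2$, the only nontrivial ingredient is the map $X\mapsto X^{1/2}$ applied to the positive semidefinite matrix $\sqrt{S_d}\,\bs{F}(I+\bs{H_u\Theta})\Tilde{S}(I+\bs{H_u\Theta})\t\bs{F}\t\sqrt{S_d}$, and the principal square root is continuous on the cone of positive semidefinite matrices, so $J$ is continuous. Fixing any feasible $(\uff^0,\bs{\Theta}^0)$, the sublevel set $\{J\leq J(\uff^0,\bs{\Theta}^0)\}$ intersected with the feasible subspace is closed and, by coercivity, bounded; hence it is compact and nonempty. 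A continuous function attains its minimum on this compact set, and by construction that minimizer is a global minimizer of $J$ over the feasible set.

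The step I expect to require the most care is the coercivity argument, specifically ruling out that the concave (subtracted) part of the squared Wasserstein term could dominate and destroy the growth of the convex quadratic; this is resolved cleanly by invoking the nonnegativity of $W_2^2$ rather than attempting to bound the concave term directly. A secondary point worth stating explicitly is the continuity of the matrix square root on the positive semidefinite cone, which legitimizes the application of the Weierstrass theorem.
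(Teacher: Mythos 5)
Your proposal is correct and follows essentially the same route as the paper: both drop the nonnegative squared Wasserstein term, use Proposition \ref{prop:tildeSposdef} to get $\Tilde{S}\otimes I\succ\bm{0}$ and hence coercivity of the quadratic part via \eqref{kronANDvec}--\eqref{FrobInnerProductANDVec}, and conclude by minimizing a continuous coercive function over the closed set defined by the linear constraints \eqref{CausalityConstr}. Your version is marginally more explicit about continuity of the matrix square root and the compact-sublevel-set/Weierstrass step, which the paper leaves implicit, but the argument is the same.
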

\begin{proof}
The objective $J$ in \eqref{eq:total-objective} is continuous and coercive (i.e., $\lim_{\|\uff\|_{2}\rightarrow\infty,\|\bm{\Theta}\|_{2}\rightarrow\infty} J = \infty$) in its arguments. 

That $J$ is continuous in $(\uff, \bs{\Theta})$ is immediate. To establish coercivity, following \cite[see equation (26)]{p:balci2020covariance}, we write
\begin{align}
J\left(\uff,\bs{\Theta}\right) = J_{1}\left(\uff\right) + J_{2}\left(\bs{\Theta}\right) + J_{3}\left(\bs{\Theta}\right) - J_{4}\left(\bs{\Theta}\right),
\label{JasJ1J2J3J4} 
\end{align}
where
\begin{subequations}
\begin{align}
&J_{1}\left(\uff\right) := \|\uff\|_{2}^{2} + \lambda \|\bs{F}\left(\bs{\Gamma}\mu_{0} + \bs{H_u}\uff\right) - \mu_{d}\|_{2}^{2}, \label{J1}\\
&J_{2}\left(\bs{\Theta}\right) := {\rm{trace}}\left(\bm{\Theta}\tilde{S}\bm{\Theta}\t\right), \label{J2}\\
&J_{3}\left(\bs{\Theta}\right) := \lambda\:{\rm{trace}}\left(\bm{F}\left(I+\bs{H_{u}\Theta}\right)\tilde{S}\left(I+\bs{H_{u}\Theta}\right)\t\bm{F}\t \right.\nonumber\\
&\qquad\qquad\qquad\qquad\left.+ S_{d}\right), \label{J3}\\
&J_{4}\left(\bs{\Theta}\right) := 2\lambda\:{\rm{trace}}\left(\left(S_{d}^{\frac{1}{2}} \bm{F}\left(I+\bs{H_{u}\Theta}\right)\tilde{S}\left(I+\bs{H_{u}\Theta}\right)\t\right.\right.\nonumber\\
&\qquad\qquad\qquad\qquad\quad\left.\left.\bm{F}\t S_{d}^{\frac{1}{2}}\right)^{\frac{1}{2}}\right).\label{J4}
\end{align}
\label{J1J2J3J4}
\end{subequations}
Since $J_1(\uff)$ in \eqref{J1} is strictly convex quadratic in $\uff$, it is clear that $J_1(\uff) \rightarrow \infty$ as $\| \uff \|_{2} \rightarrow \infty$. 

We note that $J_2(\bm{\Theta})$ equals $\tr{\bm{\Theta}\t \bm{\Theta} \Tilde{S}}$ due to invariance of the trace operator under cyclic permutation. Using \eqref{kronANDvec} and \eqref{FrobInnerProductANDVec}, we then write
\begin{align}\label{J2vecform}
    J_2(\bm{\Theta}) = \Vec{(\bm{\Theta})}\t (\Tilde{S} \otimes I) \Vec{(\bm{\Theta})}.
\end{align}
Since $I \succ \bm{0}, \Tilde{S} \succ \bm{0}$ (by Proposition \ref{prop:tildeSposdef}), we have $\Tilde{S} \otimes I \succ \bm{0}$. Thus, $J_2(\bm{\Theta})$ is a strictly convex quadratic function and $J_2(\bm{\Theta}) \rightarrow \infty$ as $\|\bm{\Theta}\|_{2} \rightarrow \infty$. 

Finally, since $J_3(\bm{\Theta}) - J_4(\bm{\Theta})$ comes from the expression of the squared Wasserstein distance which is lower bounded by zero, the function $J_3(\bm{\Theta}) - J_4(\bm{\Theta}) \geq 0$ for all $\bm{\Theta}$. Thus, $\lim_{\|\uff\|_{2}\rightarrow\infty,\|\bm{\Theta}\|_{2}\rightarrow\infty} J = \infty$, i.e., the function $J(\uff, \bm{\Theta})$ in \eqref{JasJ1J2J3J4} is coercive.   

Moreover, the constraint set 
\[\{\left(\uff,\bm{\Theta}\right)\in\mathbb{R}^{N n_u}\times \mathbb{R}^{N n_u\times (N+1)n_x} \mid \eqref{CausalityConstr}\;\text{holds}\}\] 
is closed. Thus, minimizing the objective $J$ in \eqref{eq:total-objective} subject to the constraints \eqref{CausalityConstr}, amounts to minimizing a continuous coercive function over a closed set. Hence, there exists global minimizing pair $(\uff,\bm{\Theta})$ for this problem.
\end{proof}
Notice that Proposition \ref{prop:coercive} only guarantees the existence of global minimizer; it does not guarantee uniqueness. The following example shows that in general, $J$ is nonconvex, and there might be multiple local minima which makes it challenging to find the global minimizer. 


\begin{figure}[t]
    \centering
    \includegraphics[width=\linewidth]{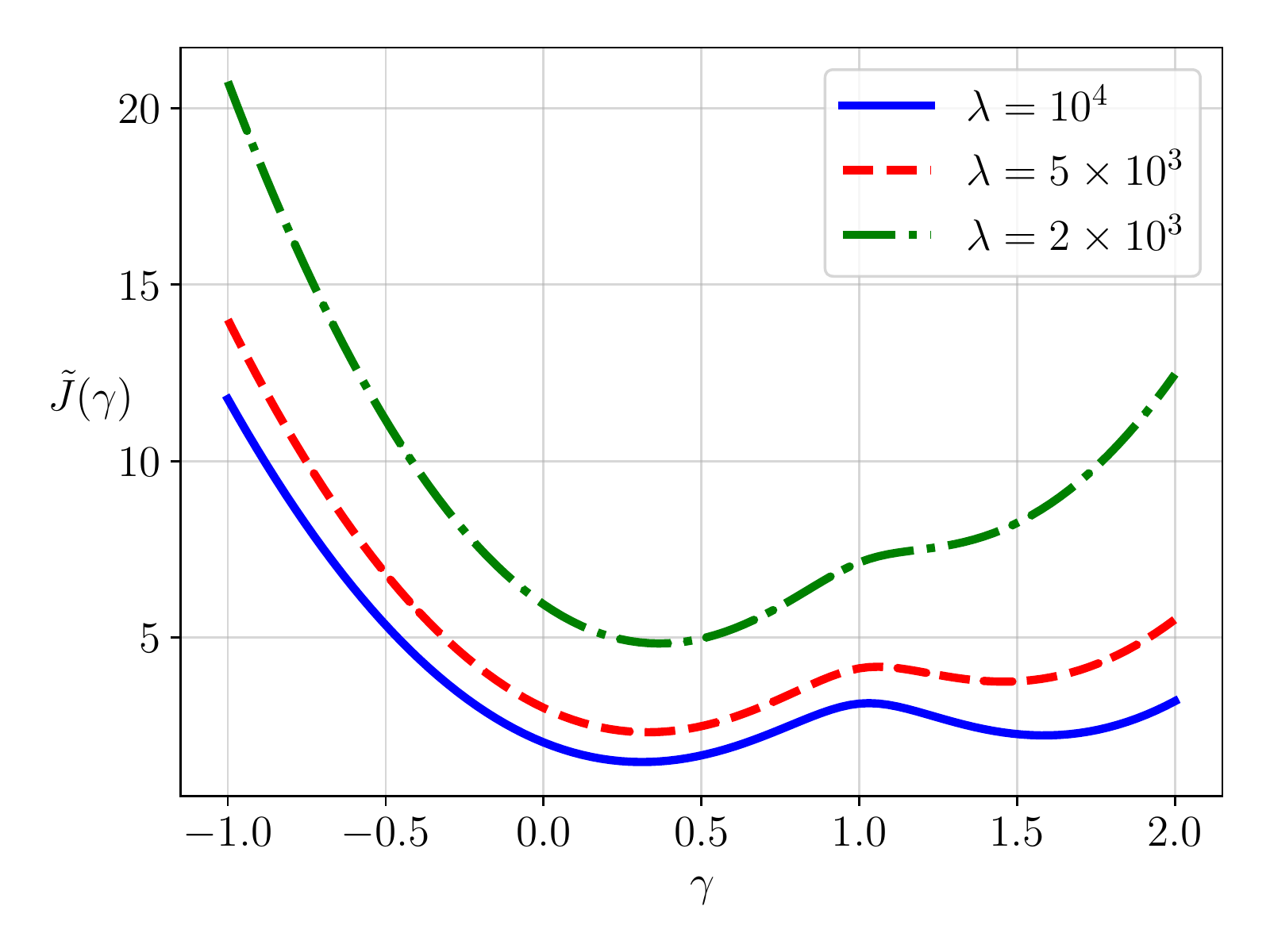}
    \vspace*{-0.15in}
    \caption{$\tilde{J}(\gamma)$ versus $\gamma$ for Example \ref{example:MultipleLocalMinima}.}
    \label{fig:nonconvex}
\end{figure}

\begin{example}(\textbf{Nonconvexity of $J$})\label{example:MultipleLocalMinima}
Consider the system matrices
\begin{align*}
    A_k = 
    \begin{bmatrix}
    1.0 & 0.1 \\
    0.0 & 1.0
    \end{bmatrix},
    B_k =
    \begin{bmatrix}
    0.0 \\
    0.1
    \end{bmatrix},
    G_k = 
    \begin{bmatrix}
    1.0 & 0.0 \\
    0.0 & 1.0 
    \end{bmatrix}\:\forall k\in\mathbb{N}_{0},
\end{align*}
with time horizon $N=10$. The initial and desired mean vectors are $\mu_0 = [0.0, 0.0]\t$, $\mu_{d}=[10.0, 5.0]\t$, respectively. The initial covariance is $S_0 = I_2$.
With this data, for two different desired distributions $\mathcal{N}\left(\mu_{d},S_{d1}\right)$ and $\mathcal{N}\left(\mu_{d},S_{d2}\right)$ with
\begin{align*}
    S_{d1} =
    \begin{bmatrix}
    4.0 & -2.0 \\ -2.0 & 2.0
    \end{bmatrix},\quad 
    S_{d2} =
    \begin{bmatrix}
    0.2 & 0.0 \\ 0.0 & 0.1
    \end{bmatrix},
\end{align*}
we numerically computed (using convex-concave procedure, see \cite[Sec. IV]{p:balci2020covariance}) the minimizers ($\uff^{1}, \bm{\Theta}^{1}$) and ($\uff^{1}, \bm{\Theta}^{2}$).
Since the desired mean vector is the same $\mu_{d}$ in both cases, $\bm{u}_{\rm{ff}}^{1} = \bm{u}_{\rm{ff}}^{2}$.

For $\gamma\in\mathbb{R}$, define an affine function $g(\gamma) := (\bm{u}_{\rm{ff}}^{1} + \gamma (\bm{u}_{\rm{ff}}^{2}-\bm{u}_{\rm{ff}}^{1}),~\bm{\Theta}^{1} + \gamma (\bm{\Theta}^{2}-\bm{\Theta}^{1}) )$ and let $\tilde{J}(\gamma):= J \circ g(\gamma)$. The function $J$ is convex iff its restriction to a line, i.e., $\tilde{J}$ is convex. Fig. \ref{fig:nonconvex} shows that the function $\tilde{J}(\gamma)$ has multiple local minima, thus the function $J(\bm{u}_{\rm{ff}}, \bm{\Theta})$ is nonconvex.
\end{example}

\section{First Order Conditions for Optimality}\label{sec:FirstOrderConditions}
Recall the function $J(\uff, \bm{\Theta})$ in \eqref{JasJ1J2J3J4} and \eqref{J1J2J3J4}. We define the index set
\[\mathcal{I}:=\{(i,j)\in\mathbb{N}_{0}[N-1]\times\mathbb{N}_{0}[N]\mid j>i\}.\] 
Now consider the Lagrangian 
\begin{align}
\mathcal{L}\left(\uff, \bs{\Theta},\bs{N}\right) = J\left(\uff,\bs{\Theta}\right) + \!\!\displaystyle\sum_{(i,j)\in\mathcal{I}}\!\langle\bm{\Psi}_{i,j}, \mathcal{E}_{u,i} \bs{\Theta} \mathcal{E}_{x,j}^{\mathrm{T}}\rangle    
\label{Lagrangian}    
\end{align}
where $\bs{\Psi}_{i,j}$ is the Lagrange multiplier matrix associated with the $(i,j)$\textsuperscript{th} linear equality constraint \eqref{CausalityConstr} for all $(i,j)\in\mathcal{I}$, and $\langle\cdot,\cdot\rangle$ denotes the Frobenius inner product. Let us denote the optimal pair as $\left(\uff^{\star},\bs{\Theta}^{\star}\right)$. The first order necessary conditions for optimality are
\[\dfrac{\partial \mathcal{L}}{\partial\bm{u}_{{\rm{ff}}}}\bigg\vert_{\left(\uff^{\star},\bs{\Theta}^{\star}\right)} = \bm{0}, \quad \dfrac{\partial \mathcal{L}}{\partial\bs{\Theta}}\bigg\vert_{\left(\uff^{\star},\bs{\Theta}^{\star}\right)} = \bm{0},\quad \mathcal{E}_{u,i} \bs{\Theta}^{\star} \mathcal{E}_{x,j}^{\mathrm{T}} = \bs{0}.\]
We next compute the gradients of $\mathcal{L}$ w.r.t. the vector variable $\uff$ and the matrix variable $\bs{\Theta}$, respectively, and use them to determine the pair $\left(\uff^{\star},\bs{\Theta}^{\star}\right)$. 

\subsection{The optimal feedforward control}
From \eqref{JasJ1J2J3J4}, \eqref{J1} and \eqref{Lagrangian}, we obtain
\begin{align*}
\dfrac{\partial \mathcal{L}}{\partial\bm{u}_{{\rm{ff}}}} \!= \!\dfrac{\partial J_{1}}{\partial\bm{u}_{{\rm{ff}}}} \!=\! 2\uff + 2\lambda\bs{H_{u}}\t \bs{F}\t\!\!\left(\bs{F}\left(\bs{\Gamma}\mu_{0} + \bs{H_u}\uff\right) - \mu_{d}\right),    
\end{align*}
and therefore, the condition $\dfrac{\partial \mathcal{L}}{\partial\bm{u}_{{\rm{ff}}}}\bigg\vert_{\left(\uff^{\star},\bs{\Theta}^{\star}\right)} = \bm{0}$ yields a linear matrix-vector equation
\begin{align}
\left(I + \lambda \bs{H_u}\t\bs{F}\t\bs{FH_u}\right) \uff^{\star} = \lambda \bs{H_u}\t\bs{F}\t\left(\mu_{d} - \bs{F\Gamma}\mu_{0}\right). 
\label{uffMatrixVectorEqn}    
\end{align}
The matrix $I + \lambda \bs{H_u}\t\bs{F}\t\bs{FH_u}$ in (\ref{uffMatrixVectorEqn}) is positive definite (thus non-singular), and the optimal feedforward control is 
\begin{align}
&\uff^{\star} = \left(I + \lambda \bs{H_u}\t\bs{F}\t\bs{FH_u}\right)^{-1} \lambda \bs{H_u}\t\bs{F}\t\left(\mu_{d} - \bs{F\Gamma}\mu_{0}\right)\nonumber\\
&= \left(I - \lambda \bs{H_u}\t\bs{F}\t\left(I + \lambda\bs{FH_u}\bs{H_u}\t\bs{F}\t\right)\bs{FH_u}\right)\lambda\bs{H_u}\t\bs{F}\t\nonumber\\
&\qquad\qquad\qquad\qquad\qquad\qquad\qquad\left(\mu_{d} - \bs{F\Gamma}\mu_{0}\right).
\label{uffstar}
\end{align}
Notice that $\uff^{\star}$ is unique.

\subsection{The optimal feedback gain}
From (\ref{JasJ1J2J3J4}), (\ref{J1J2J3J4}) and (\ref{Lagrangian}), we have
\begin{align}
\dfrac{\partial \mathcal{L}}{\partial\bs{\Theta}} &\!=\! \dfrac{\partial J_{2}}{\partial\bs{\Theta}} + \dfrac{\partial J_{3}}{\partial\bs{\Theta}} - \dfrac{\partial J_{4}}{\partial\bs{\Theta}} +\!\!\! \displaystyle\sum_{(i,j)\in\mathcal{I}}\!\dfrac{\partial}{\partial\bm{\Theta}}{\rm{trace}}\left(\mathcal{E}_{x,j}\t\bm{\Psi}_{i,j}\t\mathcal{E}_{u,i}\bm{\Theta}\right)\nonumber\\
&\!=\! \dfrac{\partial J_{2}}{\partial\bs{\Theta}} + \dfrac{\partial J_{3}}{\partial\bs{\Theta}} - \dfrac{\partial J_{4}}{\partial\bs{\Theta}} + \!\!\displaystyle\sum_{(i,j)\in\mathcal{I}}\mathcal{E}_{u,i}\t\bm{\Psi}_{i,j}\mathcal{E}_{x,j}.
\label{PartialLPartialTheta}
\end{align}
Notice that 
\begin{align}
\dfrac{\partial J_{2}}{\partial\bs{\Theta}} = \!\dfrac{\partial}{\partial\bs{\Theta}} {\rm{trace}}\!\left(\bm{\Theta}\tilde{S}\bm{\Theta}\t\right) \!= \!\dfrac{\partial}{\partial\bs{\Theta}} {\rm{trace}}\!\left(\bm{\Theta}\t\bm{\Theta}\tilde{S}\right) \!=\! 2\bs{\Theta}\tilde{S},
\label{PartialJ2PartialTheta}    
\end{align}
which follows from the invariance of trace under cyclic permutation, and the from the fact that the directional derivative (in the matricial direction $\bm{Z}$)
\begin{align*}
&\displaystyle\lim_{h\rightarrow 0} \frac{1}{h}\!\left[{\rm{trace}}\!\left(\left(\bm{\Theta}+h\bm{Z}\right)\t\left(\bm{\Theta}+h\bm{Z}\right)\tilde{S}\right)\! - {\rm{trace}}\!\left(\bm{\Theta}\t\bm{\Theta}\tilde{S}\right)\!\right]\\
&= \langle 2\bm{\Theta}\tilde{S},\bm{Z}\rangle.
\end{align*}
Furthermore, let $J_{31}(\bm{\Theta}):=\bm{\Theta}\bm{\Theta}\t$, $J_{32}(\bm{\Theta}):=\bm{FH_u\Theta}\tilde{S}^{\frac{1}{2}}$, $\bm{Y}_{3} := \dfrac{\partial}{\partial\bm{\Theta}}{\rm{trace}}\left(J_{31}\circ J_{32}(\bm{\Theta})\right)$, and notice that
\begin{align}
\dfrac{\partial J_{3}}{\partial\bs{\Theta}} \!=\! \lambda\!\left(\!2\bs{H_u}\t\bs{F}\t\bs{F}\tilde{S} + \bm{Y}_{3}\right).
\label{PartialJ3PartialTheta}    
\end{align}
From the chain rule of Jacobians, we have
\begin{align}
& \differential\:{\rm{trace}}\left(J_{31}\circ J_{32}(\bm{\Theta})\right)\nonumber\\
=&\left(\vec\left(I\right)\right)\t \jac J_{31}\left(J_{32}(\bm{\Theta})\right) \jac J_{32}(\bm{\Theta}) \:\differential\:\vec(\bm{\Theta}) \nonumber\\
= &\left(\vec(\bm{Y}_{3})\right)\t \differential\:\vec(\bm{\Theta}),
\label{Y3JacobianRelation}    
\end{align}
wherein using Lemma \ref{JacOfAXB} and \ref{JacOfXXt} from Appendix, we get
\begin{subequations}
\begin{align}
\jac J_{32}(\bm{\Theta}) &= \tilde{S}^{\frac{1}{2}} \otimes \bs{F H_u}, \label{JacJ32}\\
\jac J_{31 }(\bm{\Theta}) &= \left(I + K_{0}\right)\left(\bm{\Theta}\otimes I\right) \label{JacJ31}.
\end{align}
\label{JacJ32J31}
\end{subequations}
Combining \eqref{Y3JacobianRelation} and \eqref{JacJ32J31}, we obtain
\begin{subequations}
\begin{align}
&\vec\left(\bm{Y}_{3}\right) = \left(\jac J_{32}(\bm{\Theta})\right)\t \left(\jac J_{31}\left(J_{32}(\bm{\Theta})\right)\right)\t \vec\left(I\right)\nonumber\\
&= \left(\tilde{S}^{\frac{1}{2}}\otimes \bs{H_u}\t\bs{F}\t\right)\left(\tilde{S}^{\frac{1}{2}}\bs{\Theta}\t\bs{H_u}\t\bs{F}\t\otimes I\right)\left(I + K_{0}\right)\vec\left(I\right)\nonumber\\
&= 2\left(\tilde{S}\bs{\Theta}\t\bs{H_u}\t\bs{F}\t\otimes\bs{H_u}\t\bs{F}\t\right) \vec\left(I\right) \label{IplusKtimesvecI}\\
&= 2\:\vec\left(\bs{H_u}\t\bs{F}\t\bs{F}\bs{H_u}\bs{\Theta}\tilde{S}\right),
\label{IdentifyY3}
\end{align}
\end{subequations}
wherein \eqref{IplusKtimesvecI} used $\left(I + K_{0}\right)\vec\left(I\right) = 2\:\vec\left(I\right)$, and \eqref{IdentifyY3} follows from \eqref{kronANDvec}. 

From \eqref{IdentifyY3}, we identify $\bm{Y}_{3} = 2\bs{H_u}\t\bs{F}\t\bs{F}\bs{H_u}\bs{\Theta}\tilde{S}$, which together with \eqref{PartialJ3PartialTheta}, yields
\begin{align}
\dfrac{\partial J_{3}}{\partial\bs{\Theta}} \!=\!  2\lambda\bs{H_u}\t\bs{F}\t\bs{F}\left(I + \bs{H_u \Theta}\right)\tilde{S}.
\label{PartialJ3PartialThetaFinal}    
\end{align}

Next, let 
\begin{align*}
J_{41}\left(\bm{\Theta}\right) &:=\left(\sqrt{S}_{d} \bm{\Theta}\sqrt{S}_{d}\right)^{\frac{1}{2}},\\
J_{42}\left(\bm{\Theta}\right) &:= \bm{\Theta\Theta}\t,\quad
J_{43}\left(\bm{\Theta}\right) := \bm{F}\left(I+\bs{H_u\Theta}\right)\tilde{S}^{\frac{1}{2}},\\
\bm{Y}_{4} &:= \dfrac{\partial}{\partial\bm{\Theta}}{\rm{trace}}\left(J_{41}\circ J_{42} \circ J_{43}(\bm{\Theta})\right),
\end{align*}
and notice that $\dfrac{\partial J_{4}}{\partial\bm{\Theta}} = 2\lambda \bm{Y}_{4}$. Therefore, writing
\begin{align*}
&\differential\:{\rm{trace}}\left(J_{41}\circ J_{42}\circ J_{43}(\bm{\Theta})\right)\\
=&\left(\vec\left(I\right)\right)\t \jac J_{41}\left(J_{42}\left(J_{43}(\bm{\Theta})\right)\right) \jac J_{42}(J_{43}\left(\bm{\Theta}\right))\\
&\qquad\qquad\qquad\qquad\qquad\qquad \jac J_{43}\left(\bm{\Theta}\right) \:\differential\:\vec(\bm{\Theta})\\
=& \left(\vec(\bm{Y}_{4})\right)\t \differential\:\vec(\bm{\Theta}),
\end{align*}
we obtain
\begin{align}
&\vec\left(\bm{Y}_{4}\right) = \left(\jac J_{43}(\bm{\Theta})\right)\t \left(\jac J_{42}\left(J_{43}(\bm{\Theta})\right)\right)\t \nonumber\\
&\qquad\qquad\qquad\qquad\left(\jac J_{41}\left(J_{42}\left(J_{43}(\bm{\Theta})\right)\right)\right)\t\vec\left(I\right).
\label{vecY4}
\end{align}
To proceed further, we utilize the following results:
\begin{subequations}
\begin{align}
\jac J_{43}(\bm{\Theta}) &= \jac\!\left(\!\bs{F}\tilde{S}^{\frac{1}{2}}+\bs{F H_u \Theta}\tilde{S}^{\frac{1}{2}}\!\right) \!= \tilde{S}^{\frac{1}{2}}\otimes\bs{F H_u}, \label{JacJ43}\\
\jac J_{42}(\bm{\Theta}) &= \left(I + K_{0}\right)\left(\bs{\Theta}\otimes I\right), \label{JacJ42}\\
\jac J_{41}(\bm{\Theta}) &= \!\left(\!\left(S_{d}^{\frac{1}{2}}\bm{\Theta}S_{d}^{\frac{1}{2}}\right)^{\!\frac{1}{2}} \!\oplus\! \left(S_{d}^{\frac{1}{2}}\bm{\Theta}S_{d}^{\frac{1}{2}}\right)^{\!\frac{1}{2}}\!\right)^{\!\!-1}\!\!\left(S_{d}^{\frac{1}{2}}\otimes S_{d}^{\frac{1}{2}}\right). \label{JacJ41}
\end{align}
\label{JacJ43J42J41}
\end{subequations}
The result (\ref{JacJ43}) follows from Lemma \ref{JacOfAXB} while (\ref{JacJ42}) follows from Lemma \ref{JacOfXXt}. The expression (\ref{JacJ41}) follows from \cite[equation (30)]{halder2016finite}. 

Now let
\begin{align}
\Omega := \bm{F}\left(I + \bs{H_u \Theta}\right),    
\label{defOmega}    
\end{align}
which is a linear function of $\bs{\Theta}$. Substituting \eqref{JacJ43J42J41} in \eqref{vecY4}, and then using \eqref{ProductOfKronProduct}, \eqref{IplusK0commutesWithSelfKronProduct}, \eqref{IplusK0commutesWithSelfKronSumInverse}, and recalling $\left(I + K_{0}\right)\vec(I) = 2\:\vec(I)$, we obtain
\begin{align}
&\vec\left(\bm{Y}_{4}\right) = 2\left(\tilde{S}\Omega\t\bs{F}\t S_{d}^{\frac{1}{2}} \otimes \bs{H_u}\t\bs{F}\t S_{d}^{\frac{1}{2}}\right)\nonumber\\
& \left(\left(S_{d}^{\frac{1}{2}}\Omega\tilde{S}\Omega\t S_{d}^{\frac{1}{2}}\right)^{\frac{1}{2}} \oplus \left(S_{d}^{\frac{1}{2}}\Omega\tilde{S}\Omega\t S_{d}^{\frac{1}{2}}\right)^{\frac{1}{2}}\right)^{\!\!-1}\!\!\vec(I).
\end{align}
Therefore, following similar steps as in \cite[Appendix B, equations (32)-(35)]{halder2016finite}, we arrive at\footnote{The matrix $\Omega\tilde{S}\Omega\t$ is the right bottom corner block of size $n_x\times n_x$ from the $(N+1)n_x\times(N+1)n_x$ symmetric positive definite matrix $(I+\bs{H_u\Theta})\tilde{S}(I+\bs{H_u\Theta})\t$, and is thus symmetric positive definite.}
\begin{align}
\bm{Y}_{4} = \bs{H_u}\t\bs{F}\t \left(S_{d}\: \# \left(\Omega\tilde{S}\Omega\t\right)^{\!\!-1}\right) \Omega \tilde{S},    
\label{Y4}    
\end{align}
where $\#$ denotes the matrix geometric mean as in \eqref{matrixGM}. Hence
\begin{align}
\dfrac{\partial J_{4}}{\partial\bm{\Theta}} = 2\lambda \bm{Y}_{4} = 2\lambda \bs{H_u}\t\bs{F}\t \left(S_{d}\: \# \left(\Omega\tilde{S}\Omega\t\right)^{\!\!-1}\right) \Omega \tilde{S}.
\label{PartialJ4PartialThetaFinal}    
\end{align}

\begin{figure*}
\hrule
\begin{align}
&2\bm{\Theta}^{\star}\tilde{S} + 2\lambda\bs{H_u}\t\bs{F}\t\bs{F}\left(I + \bs{H_u \Theta^{\star}}\right)\tilde{S}\nonumber\\
&-2\lambda\bs{H_u}\t\bs{F}\t S_{d}^{\frac{1}{2}}\!\left(S_{d}^{-\frac{1}{2}}\left(\bm{F}\left(I + \bs{H_u} \bs{\Theta}^{\star}\right)\tilde{S}\left(I + \bs{\Theta}^{\star}{\t}\bs{H_u}\t\right)\bm{F}\t\right)^{\!-1} S_{d}^{-\frac{1}{2}}\right)^{\!\frac{1}{2}}\!S_{d}^{\frac{1}{2}}\bm{F}\left(I + \bs{H_u} \bs{\Theta}^{\star}\right) \tilde{S} + \!\!\!\displaystyle\sum_{(i,j)\in\mathcal{I}}\!\mathcal{E}_{u,i}\t\bm{\Psi}_{i,j}\mathcal{E}_{x,j} = \bm{0}.
\label{OptimalFeedback}
\end{align}
\hrule
\end{figure*}
Combining \eqref{PartialLPartialTheta}, \eqref{PartialJ2PartialTheta}, \eqref{PartialJ3PartialThetaFinal} and \eqref{PartialJ4PartialThetaFinal}, with $\dfrac{\partial \mathcal{L}}{\partial\bs{\Theta}}\bigg\vert_{\left(\uff^{\star},\bs{\Theta}^{\star}\right)} = \bm{0}$, we arrive at a nonlinear matrix equation in $\bm{\Theta}^{\star}$ given by \eqref{OptimalFeedback}. Thus, the primal feasibility \eqref{CausalityConstr} and the Lagrangian gradient \eqref{OptimalFeedback} together give the first order optimality conditions for $\bs{\Theta}^{\star}$.

Investigating the existence and uniqueness of solutions for the system \eqref{CausalityConstr} and \eqref{OptimalFeedback} appears technically challenging. Instead, we next focus on deriving the second order conditions for optimality. Specifically, we will derive an exact formula for the Hessian $\hess(J)$, and then use the same to deduce a sufficient condition for $\hess(J)$ to be positive definite, guaranteeing the uniqueness of $\bm{\Theta}^{\star}$.


\section{Second Order Conditions}\label{sec:SecondOrderConditions}
We start by noting that
\begin{align}
\hess\left(J\right) = \hess\left(J_{2}\right) + \hess\left(J_{3}\right) - \hess\left(J_{4}\right),
\label{HessianOfJ}    
\end{align}
where all Hessians are w.r.t. $\bm{\Theta}$. Applying Lemma \ref{JacOfAXB} on \eqref{PartialJ2PartialTheta} yields
\begin{align}
\hess\left(J_{2}\right) = 2\left(\tilde{S}\otimes I\right).
\label{HessOfJ2}    
\end{align}
Likewise, applying Lemma \ref{JacOfAXB} on \eqref{PartialJ3PartialThetaFinal} gives
\begin{align}
\hess\left(J_{3}\right) = 2\lambda\left(\tilde{S}\otimes \bs{H_u}\t\bs{F}\t\bs{F H_u}\right).
\label{HessOfJ3}    
\end{align}
To compute $\hess\left(J_{4}\right)$, we need to identify the Jacobian of \eqref{PartialJ4PartialThetaFinal} using \eqref{JacIdentificationRule}. We proceed by letting
\begin{align}
Q(\bm{\Theta}) &:= S_{d}^{\frac{1}{2}}\left(S_{d}^{-\frac{1}{2}}\left(\bm{\Theta}\tilde{S}\bm{\Theta}\t\right)^{\!-1} S_{d}^{-\frac{1}{2}}\right)^{\frac{1}{2}}S_{d}^{\frac{1}{2}}, \label{defQ}\\
P(\bm{\Theta}) &:= Q(\bm{\Theta})\bm{\Theta}.\label{defP}
\end{align}
Then $P\circ\Omega(\bm{\Theta}) = \left(S_{d}\: \# \left(\Omega\tilde{S}\Omega\t\right)^{\!\!-1}\right)\Omega$, and therefore
\begin{align}
&\:2\lambda\:\differential\:\vec\left(\bs{H_u}\t\bs{F}\t \left(S_{d}\: \# \left(\Omega\tilde{S}\Omega\t\right)^{\!\!-1}\right) \Omega \tilde{S}\right)\nonumber\\    
\stackrel{\eqref{kronANDvec}}{=}&\:\underbrace{2\lambda\left(\tilde{S}\otimes\bs{H_u}\t\bs{F}\t\right)\jac P\left(\Omega(\bm{\Theta})\right)\jac\Omega(\bm{\Theta})}_{\hess(J_{4})}\:\differential\:\vec\left(\bm{\Theta}\right). 
\label{IdentifyingHessJ4}
\end{align}

From \eqref{defP}, we have
\begin{align}
&\differential\:\vec\left(P\left(\bm{\Theta}\right)\right) = \vec\left((\differential Q)\bm{\Theta}\right) + \vec\left(Q(\differential\bm{\Theta})\right)\nonumber\\
&\stackrel{\eqref{kronANDvec}}{=} \left(\bm{\Theta}\t\otimes I\right)\vec(\differential Q) + \left(I\otimes Q\right)\vec(\differential\bm{\Theta})\nonumber\\
&\stackrel{\eqref{JacIdentificationRule}}{=} \big\{\left(\bm{\Theta}\t\otimes I\right)\jac Q + \left(I\otimes Q\right)\big\}\:\vec\left(\differential\bm{\Theta}\right)\nonumber,
\end{align}
and consequently,
\begin{align}
\jac P(\bm{\Theta}) = \left(\bm{\Theta}\t\otimes I\right)\jac Q + \left(I\otimes Q\right).
\label{jacPfromjacQ}    
\end{align}
Thus, our strategy is to follow the computational sequence:
\begin{align}
\jac Q \xrightarrow{\eqref{jacPfromjacQ}} \jac P \xrightarrow{\eqref{IdentifyingHessJ4}} \hess(J_{4}).    
\label{CompSequence}    
\end{align}
To compute $\jac Q$, let us define the following functions with symmetric positive definite matrix $X$ as the argument:
\begin{align*}
&Q_{1}(X) := X\tilde{S}X\t, \qquad\qquad\;\;\: Q_{2}(X) := X^{-1},\\
&Q_{3}(X) := \left(S_{d}^{-\frac{1}{2}}X S_{d}^{-\frac{1}{2}}\right)^{\frac{1}{2}}, \quad Q_{4}(X) := S_{d}^{\frac{1}{2}}XS_{d}^{\frac{1}{2}}.
\end{align*}
Then $Q(X) = Q_{4} \circ Q_{3} \circ Q_{2} \circ Q_{1} (X)$, and thus
\begin{align}
\jac Q(X) = &\jac Q_{4}\left(Q_{3}\left(Q_{2}\left(Q_{1}(X)\right)\right)\right)\:\jac Q_{3}\left(Q_{2}\left(Q_{1}(X)\right)\right)\nonumber\\
& \quad \jac Q_{2}\left(Q_{1}(X)\right)\: \jac Q_{1}(X),
\label{jacQ}
\end{align}
wherein
{\small{\begin{align*}
\jac Q_{1}(X)\! &= (I+K_{0})(I\otimes X\tilde{S}) \qquad\qquad\qquad\text{from Lemma \ref{JacOfXSXt},}\\
\jac Q_{2}(X)\! &= -\left(X^{-{\mathrm{T}}}\otimes X^{-1}\right) \qquad\qquad\qquad\;\text{from Lemma \ref{JacOfXinv},}\\
\jac Q_{3}(X)\! &=\!\!\left(\!\!\left(S_{d}^{-\frac{1}{2}}XS_{d}^{-\frac{1}{2}}\right)^{\!\frac{1}{2}}\!\!\oplus\! \left(S_{d}^{-\frac{1}{2}}XS_{d}^{-\frac{1}{2}}\right)^{\!\frac{1}{2}}\!\right)\!\!\left(\!S_{d}^{-\frac{1}{2}}\otimes S_{d}^{-\frac{1}{2}}\!\right)\\ 
&\qquad\qquad\qquad\qquad\qquad\qquad\quad\text{from \cite[equation (30)]{halder2016finite},}\\
\jac Q_{4}(X)\! &= S_{d}^{\frac{1}{2}}\otimes S_{d}^{\frac{1}{2}}  \qquad\qquad\qquad\qquad\qquad\text{from Lemma \ref{JacOfAXB}.}
\end{align*}}}
Substituting these Jacobians back in \eqref{jacQ} gives
\begin{align}
\jac Q(X) = \left(S_{d}^{\frac{1}{2}}\otimes S_{d}^{\frac{1}{2}}\right)\left(\!\!\left(S_{d}^{-\frac{1}{2}}(X\tilde{S}X\t)^{-1}S_{d}^{-\frac{1}{2}}\right)^{\!\frac{1}{2}}\!\!\oplus\right.\nonumber\\ 
\left.\left(S_{d}^{-\frac{1}{2}}(X\tilde{S}X\t)^{-1}S_{d}^{-\frac{1}{2}}\right)^{\!\frac{1}{2}}\!\right)
\left(\!S_{d}^{-\frac{1}{2}}\otimes S_{d}^{-\frac{1}{2}}\!\right)\nonumber\\
\left(-(X\tilde{S}X\t)^{-1}\otimes(X\tilde{S}X\t)^{-1}\right)(I+K_{0})(X\tilde{S}\otimes I).
\label{jacQfinal}    
\end{align}
Having \eqref{jacQfinal}, we follow the computational sequence \eqref{CompSequence} to obtain the following result.
\begin{theorem}\label{thm:Hessian}(\textbf{Hessian of $J$})\\
Consider $\Omega$ as in \eqref{defOmega}. Let $M:= \left(S_{d}^{-\frac{1}{2}}(\Omega\tilde{S}\Omega\t)^{-1}S_{d}^{-\frac{1}{2}}\right)^{\!\frac{1}{2}}$, and $\tilde{M}:=S_{d}^{\frac{1}{2}}M S_{d}^{\frac{1}{2}} = S_{d} \# (\Omega\tilde{S}\Omega\t)^{-1}$. Then the Hessian of $J$ in \eqref{eq:total-objective} is
\begin{align}
\hess&(J) = 2\left(\tilde{S}\otimes I\right) + 2\lambda\left(\tilde{S}\otimes \bs{H_u}\t\bs{F}\t\bs{F H_u}\right) \nonumber\\
&+ 2\lambda\left(\Omega\tilde{S}\otimes \bs{F H_u}\right)\t \left(S_{d}^{\frac{1}{2}}M S_{d}^{-\frac{1}{2}}\oplus S_{d}^{\frac{1}{2}}M S_{d}^{-\frac{1}{2}}\right)\nonumber\\
&\left(\!\left(\Omega\tilde{S}\Omega\t\right)^{\!-1}\!\!\otimes\!\left(\Omega\tilde{S}\Omega\t\right)^{\!-1}\!\right)(I+K_{0})\left(\Omega\tilde{S}\otimes \bs{F H_u}\right)\nonumber\\
&-2\lambda\left(\tilde{S}\otimes\bs{H_u}\t\bs{F}\t \tilde{M}\bs{F H_u}\right).
\label{HessJThmFormula}    
\end{align}
\end{theorem}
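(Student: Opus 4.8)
The plan is to verify \eqref{HessJThmFormula} term by term using the additive decomposition \eqref{HessianOfJ}. The first two summands are already available: $\hess(J_2)$ and $\hess(J_3)$ are given by \eqref{HessOfJ2} and \eqref{HessOfJ3}, and these reproduce verbatim the first two terms on the right-hand side of \eqref{HessJThmFormula}. Hence the whole theorem reduces to computing $\hess(J_4)$ and checking that $-\hess(J_4)$ equals the remaining two terms (the $(I+K_0)$ term and the $\tilde{M}$ term). I would obtain $\hess(J_4)$ by running the computational sequence \eqref{CompSequence}: insert $\jac Q$ from \eqref{jacQfinal} into \eqref{jacPfromjacQ} to build $\jac P$, then into the identification \eqref{IdentifyingHessJ4}.

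\noindent First I would record two evaluations. Since $\Omega$ in \eqref{defOmega} is affine in $\bs{\Theta}$, applying \eqref{kronANDvec} gives $\jac\Omega(\bs{\Theta}) = I\otimes\bs{F H_u}$. Next, evaluating $Q$ of \eqref{defQ} at the argument $\Omega$ yields $Q(\Omega)=\tilde{M}$, while evaluating \eqref{jacQfinal} at $X=\Omega$ simply replaces every occurrence of $X\tilde{S}X\t$ by $\Omega\tilde{S}\Omega\t$. The crucial algebraic simplification here is to collapse the sandwich $\left(S_{d}^{\frac{1}{2}}\otimes S_{d}^{\frac{1}{2}}\right)(M\oplus M)\left(S_{d}^{-\frac{1}{2}}\otimes S_{d}^{-\frac{1}{2}}\right)$ into the single Kronecker sum $S_{d}^{\frac{1}{2}}M S_{d}^{-\frac{1}{2}}\oplus S_{d}^{\frac{1}{2}}M S_{d}^{-\frac{1}{2}}$; this is precisely the similarity identity \eqref{KronSumProductSimilarityTransform} with $L=S_{d}^{\frac{1}{2}}$, and it is what makes $M$, rather than the raw geometric-mean factors, appear in the stated formula. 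After this collapse the factors of $\jac Q(\Omega)$ already sit in the same left-to-right order as the middle term of \eqref{HessJThmFormula}, so no reordering via \eqref{IplusK0commutesWithSelfKronSumInverse} is required at this stage.

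\noindent With $\jac Q(\Omega)$ in hand I would form $\jac P(\Omega)=(\Omega\t\otimes I)\jac Q(\Omega)+(I\otimes\tilde{M})$ from \eqref{jacPfromjacQ}, using $Q(\Omega)=\tilde{M}$, and substitute into $\hess(J_4)=2\lambda\left(\tilde{S}\otimes\bs{H_u}\t\bs{F}\t\right)\jac P(\Omega)\,\jac\Omega$ from \eqref{IdentifyingHessJ4}. The two summands of $\jac P$ then separate cleanly. The $(I\otimes\tilde{M})$ summand, combined with $\jac\Omega=I\otimes\bs{F H_u}$ and two applications of the mixed-product rule \eqref{ProductOfKronProduct}, produces $2\lambda\left(\tilde{S}\otimes\bs{H_u}\t\bs{F}\t\tilde{M}\bs{F H_u}\right)$, which is exactly the last (subtracted) term of \eqref{HessJThmFormula}. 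The $(\Omega\t\otimes I)\jac Q(\Omega)$ summand yields the middle term once I absorb $\left(\tilde{S}\otimes\bs{H_u}\t\bs{F}\t\right)(\Omega\t\otimes I)=\tilde{S}\Omega\t\otimes\bs{H_u}\t\bs{F}\t$ on the left and $(\Omega\tilde{S}\otimes I)(I\otimes\bs{F H_u})=\Omega\tilde{S}\otimes\bs{F H_u}$ on the right; the final cosmetic step is to recognize, from symmetry of $\tilde{S}$ and distributivity of transpose over $\otimes$, that $\tilde{S}\Omega\t\otimes\bs{H_u}\t\bs{F}\t=\left(\Omega\tilde{S}\otimes\bs{F H_u}\right)\t$.

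\noindent The step I expect to be the main obstacle is bookkeeping rather than conceptual: correctly propagating the single minus sign originating from $\jac Q_{2}$ through both summands (this sign flip, together with the overall minus in $-\hess(J_4)$, is what turns the middle term positive and keeps the $\tilde{M}$ term negative), and invoking \eqref{ProductOfKronProduct} enough times without inadvertently transposing a factor. Once the sign and the transpose identity are handled, summing $\hess(J_2)$, $\hess(J_3)$ and $-\hess(J_4)$ reproduces \eqref{HessJThmFormula} term by term, completing the proof.
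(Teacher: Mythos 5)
Your proposal is correct and follows essentially the same route as the paper's proof: combine \eqref{HessianOfJ}, \eqref{HessOfJ2}, \eqref{HessOfJ3} and \eqref{IdentifyingHessJ4}, evaluate $\jac\Omega = I\otimes\bs{F H_u}$ and $\jac P(\Omega)$ via \eqref{jacPfromjacQ} and \eqref{jacQfinal}, and collapse the $S_d^{\pm\frac{1}{2}}$ sandwich using \eqref{KronSumProductSimilarityTransform}. Your sign bookkeeping (the minus from $\jac Q_2$ cancelling against the minus in $-\hess(J_4)$) and the final transpose identity match the paper's calculation exactly.
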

\begin{proof}
Combining \eqref{HessianOfJ}, \eqref{HessOfJ2}, \eqref{HessOfJ3} and \eqref{IdentifyingHessJ4}, we get
\begin{align}
\hess(J) = &\:2\left(\tilde{S}\otimes I\right) + 2\lambda\left(\tilde{S}\otimes \bs{H_u}\t\bs{F}\t\bs{F H_u}\right)\nonumber\\
&-2\lambda\left(\tilde{S}\otimes\bs{H_u}\t\bs{F}\t\right)\jac P\left(\Omega(\bm{\Theta})\right)\jac\Omega(\bm{\Theta}).  
\label{HessJintermed}    
\end{align}
From \eqref{defOmega} and Lemma \ref{JacOfAXB}, we have $\jac\Omega(\bm{\Theta}) = I\otimes\bs{F H_u}$. From \eqref{jacPfromjacQ} and \eqref{jacQfinal}, we also have
\begin{align*}
&\jac P(\Omega) = \left(\Omega\t\otimes I\right)\jac Q(\Omega) + (I\otimes Q(\Omega))\\
&= -\left(\Omega\t\otimes I\right)\!\left(\!S_{d}^{\frac{1}{2}}\otimes S_{d}^{\frac{1}{2}}\!\right)\!(M\oplus M)\!\left(\!S_{d}^{-\frac{1}{2}}\otimes S_{d}^{-\frac{1}{2}}\!\right)\\
&\left(\!\left(\Omega\tilde{S}\Omega\t\right)^{\!-1}\!\!\otimes\!\left(\Omega\tilde{S}\Omega\t\right)^{\!-1}\right)(I+K_{0})(\Omega\tilde{S}\otimes I) + (I\otimes \tilde{M})\\
&\stackrel{\eqref{KronSumProductSimilarityTransform}}{=} -\left(\Omega\t\otimes I\right)\!\left(\!S_{d}^{\frac{1}{2}}M S_{d}^{-\frac{1}{2}}\oplus S_{d}^{\frac{1}{2}}M S_{d}^{-\frac{1}{2}}\right)\\
&\left(\!\left(\Omega\tilde{S}\Omega\t\right)^{\!-1}\!\!\otimes\!\left(\Omega\tilde{S}\Omega\t\right)^{\!-1}\right)(I+K_{0})(\Omega\tilde{S}\otimes I) + (I\otimes \tilde{M}).
\end{align*}
In \eqref{HessJintermed}, substituting the above for $\jac P(\Omega)$ and $\jac\Omega(\bm{\Theta})$, and then substituting $\Omega$ as function of $\bm{\Theta}$ from \eqref{defOmega}, yields \eqref{HessJThmFormula} as claimed.
\end{proof}
With the formula \eqref{HessJThmFormula} in hand, $\hess(J) \succ \bm{0}$ (strictly positive definite) is a sufficient condition for the unique solution for $\bm{\Theta}^{\star}$ (since constraints \eqref{CausalityConstr} are linear). In Theorem \ref{thm:suffconditionHessPosDef} below, we deduce a simpler sufficient condition involving the terminal covariance (i.e., covariance of the random vector $x(N)$) that guarantees $\hess(J) \succ \bm{0}$. It was shown in \cite[Sec. III]{p:balci2020covariance} that the covariance of $x(N)$ equals $\Omega\tilde{S}\Omega\t$.

\begin{theorem}\label{thm:suffconditionHessPosDef}
If the terminal covariance $\Omega\tilde{S}\Omega\t \succeq S_{d}$, then $\hess(J)\succ\bm{0}$.
\end{theorem}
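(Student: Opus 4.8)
The plan is to read off the four summands of $\hess(J)$ from the formula \eqref{HessJThmFormula} in Theorem \ref{thm:Hessian}, regroup them into three pieces, and show that each piece is positive (semi)definite, with at least one piece strictly positive definite. Throughout, write $R:=\Omega\tilde{S}\Omega\t$ for the symmetric positive definite terminal covariance, so that $\tilde{M}=S_{d}\#R^{-1}$ and $M=(S_{d}^{-\frac{1}{2}}R^{-1}S_{d}^{-\frac{1}{2}})^{\frac{1}{2}}$. The first summand $2(\tilde{S}\otimes I)$ is strictly positive definite by Proposition \ref{prop:tildeSposdef} together with $I\succ\bm{0}$, so it suffices to show that the remaining contributions are positive semidefinite.

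First I would merge the second and fourth summands of \eqref{HessJThmFormula}, namely $2\lambda(\tilde{S}\otimes\bs{H_u}\t\bs{F}\t\bs{F H_u})$ and $-2\lambda(\tilde{S}\otimes\bs{H_u}\t\bs{F}\t\tilde{M}\bs{F H_u})$, into the single term $2\lambda\big(\tilde{S}\otimes\bs{H_u}\t\bs{F}\t(I-\tilde{M})\bs{F H_u}\big)$. Here the hypothesis enters: since $R\succeq S_{d}\succ\bm{0}$ implies $R^{-1}\preceq S_{d}^{-1}$, monotonicity of the matrix geometric mean in its second argument gives $\tilde{M}=S_{d}\#R^{-1}\preceq S_{d}\#S_{d}^{-1}=I$, where I have used the identity $A\#A^{-1}=I$. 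Hence $I-\tilde{M}\succeq\bm{0}$, so $\bs{H_u}\t\bs{F}\t(I-\tilde{M})\bs{F H_u}=(\bs{F H_u})\t(I-\tilde{M})(\bs{F H_u})\succeq\bm{0}$ by congruence, and tensoring with $\tilde{S}\succ\bm{0}$ keeps the Kronecker product positive semidefinite.

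The remaining third summand of \eqref{HessJThmFormula} is $2\lambda\,A\t\,C_{0}\,(I+K_{0})\,A$ where $A:=\Omega\tilde{S}\otimes\bs{F H_u}$ and $C_{0}:=(S_{d}^{\frac{1}{2}}MS_{d}^{-\frac{1}{2}}\oplus S_{d}^{\frac{1}{2}}MS_{d}^{-\frac{1}{2}})(R^{-1}\otimes R^{-1})$; since it is a congruence $A\t(\cdot)A$, it is enough to prove that $C_{0}(I+K_{0})$ is symmetric positive semidefinite. The crucial observation is the Riccati-type identity satisfied by the geometric mean: $\tilde{M}=S_{d}\#R^{-1}$ is the unique positive definite solution of $\tilde{M}S_{d}^{-1}\tilde{M}=R^{-1}$, so with $N_{1}:=S_{d}^{\frac{1}{2}}MS_{d}^{-\frac{1}{2}}=\tilde{M}S_{d}^{-1}$ one gets $N_{1}R^{-1}=R^{-1}\tilde{M}^{-1}R^{-1}=:G\succ\bm{0}$, a symmetric matrix. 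Expanding the Kronecker sum then collapses $C_{0}$ to the manifestly symmetric $C_{0}=G\otimes R^{-1}+R^{-1}\otimes G\succ\bm{0}$. Finally, using $K_{0}(M_{1}\otimes M_{2})=(M_{2}\otimes M_{1})K_{0}$ I would verify that $C_{0}$ commutes with $K_{0}$, hence with $I+K_{0}$; as $C_{0}\succ\bm{0}$ and $I+K_{0}\succeq\bm{0}$ commute and are symmetric, they are simultaneously orthogonally diagonalizable, so $C_{0}(I+K_{0})$ is symmetric with nonnegative eigenvalues, i.e.\ positive semidefinite. Consequently the third summand is positive semidefinite.

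Combining the three pieces, $\hess(J)$ is the sum of a strictly positive definite matrix and two positive semidefinite matrices, whence $\hess(J)\succ\bm{0}$. The main obstacle is the third paragraph's algebra: recognizing that the asymmetric-looking factor $C_{0}(I+K_{0})$ is in fact symmetric positive semidefinite. This hinges on the geometric-mean identity $\tilde{M}S_{d}^{-1}\tilde{M}=R^{-1}$, which symmetrizes $N_{1}R^{-1}$, and on the commutation of the symmetrized $C_{0}$ with $K_{0}$; everything else—the positive definiteness of $\tilde{S}\otimes I$, the monotonicity of $\#$, and the congruence arguments—is routine. Note that the hypothesis $R\succeq S_{d}$ is used only to force $\tilde{M}\preceq I$, while the third summand is positive semidefinite unconditionally, which is consistent with the nonconvexity observed in Example \ref{example:MultipleLocalMinima} when the hypothesis fails.
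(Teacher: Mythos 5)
Your proof is correct, and its overall architecture matches the paper's: split \eqref{HessJThmFormula} into its four summands, show the first is strictly positive definite, use the hypothesis only to control the interplay of the second and fourth summands, and show the third is unconditionally positive semidefinite. Two points differ in execution. For terms 2 and 4, you invoke monotonicity of $\#$ in its second argument together with $S_{d}\#S_{d}^{-1}=I$, whereas the paper applies the L\"{o}wner--Heinz theorem twice (operator decrease of $X\mapsto X^{-1}$, operator increase of $X\mapsto X^{\frac{1}{2}}$) sandwiched between congruences by $S_{d}^{\pm\frac{1}{2}}$; these are the same underlying facts, since that is precisely how one proves the monotonicity of $\#$, so the conclusion $\tilde{M}\preceq I$ is reached identically. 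The genuine divergence is in term 3: the paper defines $T:=\bigl(S_{d}^{\frac{1}{2}}MS_{d}^{-\frac{1}{2}}\oplus S_{d}^{\frac{1}{2}}MS_{d}^{-\frac{1}{2}}\bigr)\bigl((\Omega\tilde{S}\Omega\t)^{-1}\otimes(\Omega\tilde{S}\Omega\t)^{-1}\bigr)$, asserts $T\succ\bm{0}$ as a ``product of positive definite matrices,'' and argues via the spectrum of $\sqrt{T}(I+K_{0})\sqrt{T}$ --- a step that is loose as written, since a product of two positive definite matrices is in general neither symmetric nor positive definite in the L\"{o}wner sense. Your route closes exactly that gap: the Riccati identity $\tilde{M}S_{d}^{-1}\tilde{M}=R^{-1}$ for the geometric mean shows $N_{1}R^{-1}=R^{-1}\tilde{M}^{-1}R^{-1}$ is symmetric, which collapses $C_{0}$ to the manifestly symmetric positive definite $G\otimes R^{-1}+R^{-1}\otimes G$, and the commutation with $K_{0}$ then makes $C_{0}(I+K_{0})$ honestly symmetric positive semidefinite before the congruence by $\Omega\tilde{S}\otimes\bs{F H_u}$. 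So your argument proves the same statement by the same strategy but with a tighter justification of the one step the paper glosses over; the closing observation that the hypothesis $\Omega\tilde{S}\Omega\t\succeq S_{d}$ enters only through $\tilde{M}\preceq I$ is also consistent with, though not stated in, the paper.
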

\begin{proof}
Let us view the right-hand-side of \eqref{HessJThmFormula} as linear combination of four terms. We refer to $2\left(\tilde{S}\otimes I\right)$ as term 1, the quantity $2\lambda\left(\tilde{S}\otimes \bs{H_u}\t\bs{F}\t\bs{F H_u}\right)$ as term 2, and $2\lambda\left(\tilde{S}\otimes\bs{H_u}\t\bs{F}\t \tilde{M}\bs{F H_u}\right)$ as term 4. The remaining term in \eqref{HessJThmFormula} is referred to as term 3.

We now make use of two instances of the L\"{o}wner–Heinz theorem \cite{lowner1934monotone,heinz1951beitrage,kato1952notes}; see also \cite[Sec. 2]{carlen2010trace}. Specifically, since $X \mapsto X^{-1}$ is operator decreasing, we have 
\begin{align}
\Omega\tilde{S}\Omega\t \succeq S_{d}  &\Rightarrow\:  \left(\Omega\tilde{S}\Omega\t\right)^{-1} \preceq S_{d}^{-1}  \nonumber\\
&\Rightarrow\:  S_{d}^{-\frac{1}{2}}\left(\Omega\tilde{S}\Omega\t\right)^{-1}S_{d}^{-\frac{1}{2}} \preceq S_{d}^{-2},\label{CongruenceTransform}
\end{align}
where the last line follows from the congruence transform by $S_{d}^{-\frac{1}{2}}$. On the other hand, since $X\mapsto X^{\frac{1}{2}}$ is operator increasing, \eqref{CongruenceTransform} gives
\begin{align}
\underbrace{\left(S_{d}^{-\frac{1}{2}}\left(\Omega\tilde{S}\Omega\t\right)^{-1}S_{d}^{-\frac{1}{2}}\right)^{\frac{1}{2}}}_{=M\text{ (defined in Thm. \ref{thm:Hessian})}} \preceq S_{d}^{-1} \Rightarrow \!\!\!\!\underbrace{S_{d}^{\frac{1}{2}}MS_{d}^{\frac{1}{2}}}_{=\tilde{M}\text{ (defined in Thm. \ref{thm:Hessian})}} \!\!\preceq I,
\label{MtildeLessThanIdentity}
\end{align}
where the last inequality is due to congruence transform by $S_{d}^{\frac{1}{2}}$. Since $\tilde{S}\succ\bm{0}$, it follows from \eqref{MtildeLessThanIdentity} that 
\[\tilde{S}\otimes\bs{H_u}\t\bm{F}\t\tilde{M}\bm{F}\bs{H_u} \preceq \tilde{S}\otimes\bs{H_u}\t\bm{F}\t\bm{F}\bs{H_u},\]
and multiplying both sides of the above by $-2\lambda<0$, we get
\begin{align}
-\:\text{term 4} \succeq -\text{term 2}.\label{term4ineq}    
\end{align}

Since $\tilde{S}\succ\bm{0}$ (see Proposition \ref{prop:tildeSposdef}), we have
\begin{align}
\text{term 1} \succ \bm{0}.\label{term1ineq}    
\end{align}

On the other hand, since $M\succ\bm{0}$, the similarity transform $S_{d}^{\frac{1}{2}}MS_{d}^{-\frac{1}{2}}\succ\bm{0}$, and therefore, $S_{d}^{\frac{1}{2}}MS_{d}^{-\frac{1}{2}}\oplus S_{d}^{\frac{1}{2}}MS_{d}^{-\frac{1}{2}}\succ\bm{0}$. Furthermore, since $(\Omega\tilde{S}\Omega\t)^{-1}\succ\bm{0}$, we have $(\Omega\tilde{S}\Omega\t)^{-1}\otimes (\Omega\tilde{S}\Omega\t)^{-1}\succ\bm{0}$. Because the product of positive definite matrices is positive definite, we thus get
\[\underbrace{\left(\!S_{d}^{\frac{1}{2}}MS_{d}^{-\frac{1}{2}}\oplus S_{d}^{\frac{1}{2}}MS_{d}^{-\frac{1}{2}}\!\right)\!\left(\!(\Omega\tilde{S}\Omega\t)^{-1}\otimes (\Omega\tilde{S}\Omega\t)^{-1}\!\right)}_{=:T}\succ\bm{0}.\]
Recall from Sec. \ref{s:prelim} that the matrix $I+K_{0}$ is positive semidefinite (has eigenvalues 0 and 2). With matrix $T\succ\bm{0}$ defined as above, notice that the spectrum of $T(I+K_{0})$ is identical to the spectrum of $\sqrt{T}(I+K_{0})\sqrt{T}\succeq\bm{0}$. So, $T(I+K_{0})\succeq\bm{0}$, and consequently,
\begin{align}
\underbrace{2\lambda\left(\Omega\tilde{S}\otimes\bs{F H_u}\right)\t T(I+K_{0})\left(\Omega\tilde{S}\otimes\bs{F H_u}\right)}_{=\:\text{term 3}} \succeq \bm{0}.
\label{term3ineq}     
\end{align}
From \eqref{HessJThmFormula}, we arrive at
\begin{align*}
\hess(J) &= \text{term 1} + \text{term 2} + \text{term 3} - \text{term 4}\\
& \stackrel{\eqref{term4ineq}}{\succeq} \text{term 1} + \text{term 3}\\
& \stackrel{\eqref{term1ineq}, \eqref{term3ineq}}{\succ}\bm{0},    
\end{align*}
completing the proof.
\end{proof}


\section{Conclusions}\label{s:conclusion}
In this paper, we analyzed the covariance steering problem for discrete time Gaussian linear systems with a squared Wasserstein distance terminal cost focusing on the existence and the uniqueness of the solution to the problem. We showed that this problem is in general nonconvex, and may admit more than one local minimizers. We also derived the analytical expression of the Jacobian and the Hessian of the objective function based on specialized tools from matrix calculus, and obtained the first-order and second-order conditions for optimality. Finally, we presented a sufficient condition for the strict convexity of the performance index, thereby guaranteeing the uniqueness of the solution to the optimization problem under the same condition. This sufficient condition is particularly appealing: the desired state covariance is upper bounded (in L\"{o}wner sense) by the terminal state covariance. The analysis of the convergence rate of the convex-concave procedure, and the development of new computational schemes exploiting the derived first-order and second-order conditions for optimality, will be explored in our future research.


\appendix
We collect few lemmas on the Jacobians of some matrix functions which are used in this paper.
\begin{lemma}\label{JacOfAXB}
Let $F(X) := AXB$. Then $\jac F(X) = B\t\otimes A$.
\end{lemma}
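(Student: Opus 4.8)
The plan is to use the matrix-differential machinery already set up in Section \ref{s:prelim}, in particular the Jacobian identification rule \eqref{JacIdentificationRule} together with the vec--Kronecker identity \eqref{kronANDvec}. The strategy is to compute $\differential\:\vec(F(X))$ directly, cast it into the form $(\text{matrix})\,\differential\:\vec X$, and then read off the Jacobian by matching against \eqref{JacIdentificationRule}.

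First I would take the differential of $F(X) = AXB$. Since $A$ and $B$ do not depend on $X$, the product rule leaves only the middle factor differentiated, giving $\differential F = A\,(\differential X)\,B$. Next I would apply the vectorization operator and use the fact noted in Section \ref{s:prelim} that $\differential$ and $\vec$ commute, so that $\differential\:\vec(F) = \vec\!\big(A\,(\differential X)\,B\big)$.

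The key step is then to invoke \eqref{kronANDvec} with $M_{1} = A$, $M_{2} = \differential X$, and $M_{3} = B$, which rewrites the right-hand side as $(B\t \otimes A)\,\vec(\differential X) = (B\t \otimes A)\,\differential\:\vec X$. Comparing this with the identification rule \eqref{JacIdentificationRule}, namely $\differential\:\vec(F(X)) = \jac F(X)\,\differential\:\vec X$, immediately yields $\jac F(X) = B\t \otimes A$.

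This argument is essentially a one-line verification, and I do not expect any substantive obstacle. The only point requiring minor care is the placement of the transpose and the ordering inside the Kronecker product: it is the \emph{trailing} factor $B$ that appears transposed and in the \emph{left} slot, so one must assign the factors in \eqref{kronANDvec} exactly as above to avoid an erroneous $A\t \otimes B$.
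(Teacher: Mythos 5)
Your proof is correct. The paper does not actually prove this lemma in-house---it simply cites \cite[Lemma 3 in Appendix]{halder2016finite}---and your derivation (differential of $AXB$, vectorize, apply \eqref{kronANDvec}, identify via \eqref{JacIdentificationRule}) is precisely the standard argument that reference contains, with the transpose and Kronecker ordering handled correctly.
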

\begin{proof}
See \cite[Lemma 3 in Appendix]{halder2016finite}.
\end{proof}
\begin{lemma}\label{JacOfXXt}
Let $F(X) := XX\t$. Then $\jac F(X) = (I + K_{0})\left(X\otimes I\right)$.
\end{lemma}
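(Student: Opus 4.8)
The plan is to compute the matrix differential of $F(X) = XX\t$ directly and then read off the Jacobian via the identification rule \eqref{JacIdentificationRule}. First I would apply the product rule for matrix differentials to obtain
\begin{align*}
\differential F(X) = (\differential X)\,X\t + X\,(\differential X)\t,
\end{align*}
treating the two summands separately in what follows.

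Next I would vectorize each summand using \eqref{kronANDvec}. For the first summand, writing $(\differential X)X\t = I\,(\differential X)\,X\t$ gives $\vec\!\left((\differential X)X\t\right) = (X\otimes I)\,\vec(\differential X)$. For the second summand, writing $X(\differential X)\t = X\,(\differential X)\t\,I$ gives $\vec\!\left(X(\differential X)\t\right) = (I\otimes X)\,\vec\!\left((\differential X)\t\right)$. The key device for the second summand is the commutation matrix: from its defining property $\vec(M)=K_{0}\vec(M\t)$ applied to $M=\differential X$ (and using $K_{0}=K_{0}^{-1}$), one has $\vec\!\left((\differential X)\t\right) = K_{0}\,\vec(\differential X)$, so the second summand becomes $(I\otimes X)K_{0}\,\vec(\differential X)$.

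I would then eliminate the awkwardly positioned $K_{0}$ by invoking the intertwining identity $K_{0}(M_{1}\otimes M_{2}) = (M_{2}\otimes M_{1})K_{0}$ recorded in Section \ref{s:prelim}: taking $M_{1}=X$, $M_{2}=I$ yields $(I\otimes X)K_{0} = K_{0}(X\otimes I)$. Substituting this, the second summand reads $K_{0}(X\otimes I)\,\vec(\differential X)$. Adding the two vectorized summands then gives
\begin{align*}
\differential\,\vec\!\left(F(X)\right) = (I+K_{0})(X\otimes I)\,\vec(\differential X),
\end{align*}
and comparing with \eqref{JacIdentificationRule} identifies $\jac F(X) = (I+K_{0})(X\otimes I)$, as claimed.

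The only delicate step is the bookkeeping for the transpose term: one must correctly convert $\vec\!\left((\differential X)\t\right)$ into $K_{0}\vec(\differential X)$ and then commute $K_{0}$ past the Kronecker factor in the right order, since a slip here would place $K_{0}$ on the wrong side or swap the factors of $X\otimes I$. Everything else is a routine application of the product rule and \eqref{kronANDvec}, so I expect the commutation-matrix manipulation to be where care is needed.
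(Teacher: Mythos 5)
Your proof is correct, and it is essentially the paper's proof: the paper disposes of this lemma by citing \cite[Ch.~9, Sec.~14]{magnus2019matrix}, and your derivation — product rule $\differential(XX\t) = (\differential X)X\t + X(\differential X)\t$, vectorization via \eqref{kronANDvec}, the commutation-matrix conversion $\vec\left((\differential X)\t\right) = K_{0}\vec(\differential X)$, and the intertwining $(I\otimes X)K_{0} = K_{0}(X\otimes I)$ — is exactly the standard argument contained in that reference. The only (harmless) caveat is that for rectangular $X$ the commutation matrices on the two sides of the intertwining identity have different sizes, but this matches the paper's own loose use of a single symbol $K_{0}$, and in the square case your steps are verbatim correct.
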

\begin{proof}
See \cite[Ch. 9, Sec. 14]{magnus2019matrix}.
\end{proof}
\begin{lemma}\label{JacOfXSXt}
Let $F(X) := X\tilde{S}X\t$ where $\tilde{S}$ is a given symmetric positive definite matrix. Then $\jac F(X) = (I + K_{0})\left(X\tilde{S}\otimes I\right)$.
\end{lemma}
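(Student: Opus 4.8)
The plan is to compute the matrix differential of $F$, vectorize it, and then read off the Jacobian through the identification rule \eqref{JacIdentificationRule}. First I would apply the product rule for differentials, treating $\tilde{S}$ as a constant matrix, to obtain
\[
\differential F = (\differential X)\tilde{S}X\t + X\tilde{S}(\differential X)\t,
\]
where I have used that differentiation and transposition commute, so that $\differential(X\t) = (\differential X)\t$. The goal is to bring $\vec(\differential F)$ into the form $\jac F(X)\,\vec(\differential X)$ for some matrix independent of the direction $\differential X$.

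Next I would vectorize each of the two terms using \eqref{kronANDvec}. For the first term, writing $(\differential X)\tilde{S}X\t = I\,(\differential X)\,(\tilde{S}X\t)$ and identifying $\differential X$ as the middle factor gives $\vec\big((\differential X)\tilde{S}X\t\big) = \big(X\tilde{S}\t\otimes I\big)\vec(\differential X) = \big(X\tilde{S}\otimes I\big)\vec(\differential X)$, where the last equality uses the symmetry $\tilde{S}\t=\tilde{S}$. For the second term, writing $X\tilde{S}(\differential X)\t = (X\tilde{S})\,(\differential X)\t\,I$ gives $\vec\big(X\tilde{S}(\differential X)\t\big) = \big(I\otimes X\tilde{S}\big)\vec\big((\differential X)\t\big)$. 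The key manipulation here is to replace $\vec\big((\differential X)\t\big)$ by $K_{0}\vec(\differential X)$, which follows from the defining property $\vec(M)=K_{0}\vec(M\t)$ of the commutation matrix together with $K_{0}^{-1}=K_{0}$.

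The main obstacle — in fact the only subtlety — is then to rewrite the second term so that it shares the common right factor $\big(X\tilde{S}\otimes I\big)$ with the first. For this I would invoke $K_{0}(M_{1}\otimes M_{2})=(M_{2}\otimes M_{1})K_{0}$ in the form $\big(I\otimes X\tilde{S}\big)K_{0} = K_{0}\big(X\tilde{S}\otimes I\big)$, which turns the second contribution into $K_{0}\big(X\tilde{S}\otimes I\big)\vec(\differential X)$. Summing the two contributions yields $\vec(\differential F) = (I+K_{0})\big(X\tilde{S}\otimes I\big)\vec(\differential X)$, and the identification rule \eqref{JacIdentificationRule} then delivers $\jac F(X) = (I+K_{0})\big(X\tilde{S}\otimes I\big)$, as claimed.

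As an alternative route that sidesteps the commutation-matrix bookkeeping, I would exploit $\tilde{S}\succ\bm{0}$ to factor $\tilde{S}=\tilde{S}^{\frac{1}{2}}\tilde{S}^{\frac{1}{2}}$ and write $F = G\circ L$ with $L(X):=X\tilde{S}^{\frac{1}{2}}$ and $G(Y):=YY\t$. The chain rule of Jacobians, combined with Lemma \ref{JacOfAXB} (which gives $\jac L(X)=\tilde{S}^{\frac{1}{2}}\otimes I$, using that $\tilde{S}^{\frac{1}{2}}$ is symmetric) and Lemma \ref{JacOfXXt} (which gives $\jac G(L(X))=(I+K_{0})\big(X\tilde{S}^{\frac{1}{2}}\otimes I\big)$), reduces after a single application of the mixed-product identity \eqref{ProductOfKronProduct} to the same formula $\jac F(X)=(I+K_{0})\big(X\tilde{S}\otimes I\big)$. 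I would present the direct differential computation as the primary argument, since it is self-contained and mirrors the other Jacobian derivations in the paper, and note the chain-rule factorization as a quick cross-check.
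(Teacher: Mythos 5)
Your proof is correct, and its primary argument takes a genuinely different route from the paper's. The paper proves this lemma by the chain rule: it factors $F = F_{1}\circ F_{2}$ with $F_{1}(Y):=YY\t$ and $F_{2}(X):=X\tilde{S}^{\frac{1}{2}}$, applies Lemmas \ref{JacOfXXt} and \ref{JacOfAXB}, and collapses the product via \eqref{ProductOfKronProduct} --- which is to say, your ``alternative cross-check'' is essentially verbatim the paper's proof, while your main argument (differentiate $F$ directly to get $\differential F = (\differential X)\tilde{S}X\t + X\tilde{S}(\differential X)\t$, vectorize both terms via \eqref{kronANDvec}, replace $\vec\big((\differential X)\t\big)$ by $K_{0}\vec(\differential X)$, and commute $(I\otimes X\tilde{S})K_{0} = K_{0}(X\tilde{S}\otimes I)$ before identifying the Jacobian through \eqref{JacIdentificationRule}) is a first-principles derivation the paper does not give. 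Each buys something: the paper's composition argument is three lines once the appendix lemmas are in place, but it requires forming $\tilde{S}^{\frac{1}{2}}$ and so leans on positive semidefiniteness of $\tilde{S}$; your direct computation is self-contained, uses only the symmetry $\tilde{S}\t=\tilde{S}$ (positive definiteness plays no role), and in fact recovers Lemma \ref{JacOfXXt} as the special case $\tilde{S}=I$ rather than consuming it as an input. Your swap step is exactly the dimension-generic property $K_{0}(M_{1}\otimes M_{2})=(M_{2}\otimes M_{1})K_{0}$ stated in Section \ref{s:prelim} with $M_{1}=X\tilde{S}$, $M_{2}=I$; the only bookkeeping point worth flagging is that for rectangular $X$ the commutation matrix in $\vec\big((\differential X)\t\big)=K_{0}\vec(\differential X)$ and the one in the final formula have different sizes, a distinction the paper's generic $K_{0}$ notation also suppresses, so your usage is consistent with its conventions.
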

\begin{proof}
Let $F_{1}(X) = XX\t$, and $F_{2}(X) = X\tilde{S}^{\frac{1}{2}}$. Then $F(X) = F_{1}\circ F_{2}(X)$. Hence 
\begin{align*}
\jac F(X) &= \jac F_{1}\left(F_{2}(X)\right)\jac F_{2}(X)\\
&= \underbrace{(I + K_{0})\left(X\tilde{S}^{\frac{1}{2}}\otimes I\right)}_{\text{using Lemma \ref{JacOfXXt}}}\underbrace{\left(\tilde{S}^{\frac{1}{2}}\otimes I\right)}_{\text{using Lemma \ref{JacOfAXB}}}\\ &\stackrel{\eqref{ProductOfKronProduct}}{=} (I + K_{0})\left(X\tilde{S}\otimes I\right).\qedhere   
\end{align*}
\end{proof}
\begin{lemma}\label{JacOfXinv}
For $X$ nonsingular, let $F(X) := X^{-1}$. Then $\jac F(X) = -\left(X^{-{\mathrm{T}}}\otimes X^{-1}\right)$.
\end{lemma}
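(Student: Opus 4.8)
The plan is to obtain the differential $\differential(X^{-1})$ in closed form by differentiating the defining identity $X X^{-1} = I$, and then to vectorize the resulting expression so as to read off the Jacobian directly via the identification rule \eqref{JacIdentificationRule}.

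First I would apply the differential operator to both sides of $X X^{-1} = I$. Since the right-hand side is constant, its differential vanishes, and the product rule for matrix differentials gives $(\differential X) X^{-1} + X\,\differential(X^{-1}) = \bm{0}$. Solving this linear relation for the unknown differential, which is permissible because $X$ is nonsingular, yields $\differential(X^{-1}) = -X^{-1}(\differential X) X^{-1}$.

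Next I would vectorize both sides. Applying the identity \eqref{kronANDvec} to the triple product $-X^{-1}(\differential X) X^{-1}$, with the outer factors $X^{-1}$ playing the roles of the left and right matrices and $\differential X$ the middle one, gives $\vec\big(\differential(X^{-1})\big) = -\big((X^{-1})\t \otimes X^{-1}\big)\vec(\differential X)$. Because the vectorization and differential operators commute, the left-hand side equals $\differential\,\vec(X^{-1})$, and writing $(X^{-1})\t = X^{-\mathrm{T}}$ we obtain $\differential\,\vec(F(X)) = -(X^{-\mathrm{T}} \otimes X^{-1})\,\differential\,\vec X$. Comparing this against the Jacobian identification rule \eqref{JacIdentificationRule} then identifies $\jac F(X) = -(X^{-\mathrm{T}} \otimes X^{-1})$, as claimed.

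There is no substantive obstacle in this argument; the single point demanding care is the correct bookkeeping when applying the vectorization identity \eqref{kronANDvec}, so that the transpose attaches to the appropriate outer factor and the Kronecker product appears in the order $X^{-\mathrm{T}} \otimes X^{-1}$ rather than the reverse.
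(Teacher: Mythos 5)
Your proposal is correct and follows exactly the paper's own argument: differentiate the identity $XX^{-1}=I$ to obtain $\differential(X^{-1}) = -X^{-1}(\differential X)X^{-1}$, vectorize via \eqref{kronANDvec}, and identify the Jacobian through \eqref{JacIdentificationRule}. There is no substantive difference between the two proofs.
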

\begin{proof}
Applying the differential operator $\differential(\cdot)$ to both sides of the identity $XX^{-1}=I$ gives $\left(\differential X\right) X^{-1} + X \left(\differential X^{-1}\right) = \bm{0}$, which upon rearranging yields 
\[\differential X^{-1} = - X^{-1}\left(\differential X\right) X^{-1}.\]
Applying $\vec$ to both sides of the above and then using (\ref{kronANDvec}), results in
\[\differential\:\vec X^{-1} = -\left(X^{-{\mathrm{T}}}\otimes X^{-1}\right) \differential\:\vec\:X.\]
In above, invoking \eqref{JacIdentificationRule} completes the proof.
\end{proof}

\bibliographystyle{ieeetr}
\bibliography{convex-wasserstein}

\end{document}